\newcolumntype{x}[1]{>{\centering\arraybackslash\hspace{0pt}}p{#1}}
\theoremstyle{definition}
\newtheorem{theorem}{Theorem}[section]
\newtheorem{definition}[theorem]{{{Definition}}}
\newtheorem{example}[theorem]{{{Example}}}
\newtheorem{remark}[theorem]{{{Remark}}}
\newtheorem{corollary}[theorem]{{{Corollary}}}
\newtheorem{proposition}[theorem]{{{Proposition}}}
\newtheorem{lemma}[theorem]{{{Lemma}}}
\newtheorem{question}[theorem]{{{Question}}}
\newcommand{\pgm}{\text{PG}(k-1, q^{m})}
\newcommand{\Fqn}{\mathbb{F}_{q}^{n}}
\newcommand{\nkdm}{[n,k,d]_{q^m/q}}
\newcommand{\numberset}{\mathbb}
\newcommand{\C}{\mathcal{C}}
\newcommand{\F}{\numberset{F}}
\newcommand{\Mat}{\mbox{Mat}}
\newcommand{\mS}{\mathcal{S}}
\newcommand{\mC}{\mathcal{C}}
\newcommand{\mB}{\mathcal{B}}
\newcommand{\mM}{\mathcal{M}}
\newcommand{\mU}{\mathcal{U}}
\newcommand{\mV}{\mathcal{V}}
\newcommand{\mH}{\mathcal{H}}
\newcommand{\wt}{\textnormal{wt}}
\newcommand{\Fq}{\F_q}
\newcommand{\Fm}{\F_{q^m}}
\newcommand{\rk}{\textnormal{rk}}
\DeclareMathOperator{\GL}{GL}
\DeclareMathOperator{\PG}{PG}
\newcommand{\Fqmk}{\mathbb{F}_{q^{m}}^{k}}
\newcommand{\Fqmn}{\mathbb{F}_{q^{m}}^{n}}
\newcommand{\Fqm}{\mathbb{F}_{q^{m}}}
\newcommand{\rowspan}{\text{rowspan}}
\newcommand{\nkdqm}{[n, k, d]_{q^{m}/q}}
\title{Linear rank-metric intersecting codes}
\author{Daniele Bartoli$^1$}
\author{Martino Borello$^{2,3}$}
\author{Giuseppe Marino$^4$}
\author{Martin Scotti$^2$}
\address{$^1$Universit\`a degli Studi di Perugia, Italy.}
\address{$^2$Universit\'e Paris 8, Laboratoire de G\'eom\'etrie, Analyse et Applications, LAGA, Universit\'e Sorbonne Paris Nord, CNRS, UMR 7539, France.}
\address{$^3$INRIA, France.}
\address{$^4$Universit\`a di Napoli Federico II, Italy.}
\email{daniele.bartoli@unipg.it} \email{martino.borello@univ-paris8.fr}
\email{giuseppe.marino@unina.it}
\email{martin.scotti@etud.univ-paris8.fr}
\thanks{D.~B. and G.~M. thank the Italian National Group for Algebraic and Geometric Structures and their Applications (GNSAGA—INdAM) which supported the research. They would also like to thank the Departement of Mathematics and the LAGA laboratory for their support and the kind hospitality in their stimulating research environment}
\thanks{M.~B. and M.~S. are partially supported by the ANR-21-CE39-0009 - BARRACUDA (French \emph{Agence Nationale de la Recherche}) and PHC Galil\'ee 2024 (project 50424WM)}
\begin{document}

\begin{abstract} 
In this paper we introduce and investigate rank-metric intersecting codes, a new class of linear codes in the rank-metric context, inspired by the well-studied notion of intersecting codes in the Hamming metric. A rank-metric code is said to be intersecting if any two nonzero codewords have supports intersecting non trivially. We explore this class from both a coding-theoretic and geometric perspective, highlighting its relationship with minimal codes, MRD codes, and Hamming-metric intersecting codes. We derive structural properties, sufficient conditions based on minimum distance, and geometric characterizations in terms of 2-spannable $q$-systems. We establish upper and lower bounds on code parameters and show some constructions, which leave a range of unexplored parameters. Finally, we connect rank-intersecting codes to other combinatorial structures such as $(2,1)$-separating systems and frameproof codes. 
\end{abstract}
\maketitle

\noindent {\bf Keywords.} Rank-metric codes; Intersecting codes; $q$-systems; Linear sets; $(2,1)$-separating systems; Frameproof codes.\\
{\bf MSC classification.} 51E20, 94B05, 	94B27, 94B65

\bigskip

\section*{Introduction}

Intersecting codes in the Hamming metric are linear codes in which any two nonzero codewords share at least one coordinate in their support, where the support of a codeword is the set of nonzero coordinates. These codes were originally introduced in the foundational works \cite{miklos1984linear,katona1983minimal} and have since been studied extensively in the literature (e.g., \cite{borello2025geometry,CZ,retter1989intersecting,cohnen2003intersecting}), primarily within the binary setting. In the binary case, intersecting codes coincide with minimal codes, a class that has garnered significant attention over the past two decades. Intersecting codes are relevant to various practical scenarios, including communication over AND channels, applications in secret sharing schemes \cite{massey1993minimal} and oblivious transfer protocols \cite{brassard2002oblivious}, and connections to related combinatorial structures such as frameproof codes \cite{blackburn2003frameproof} and 
$(2,1)$-separating systems \cite{randriambololona20132}. They are also related to zero-sum problems in additive combinatorics and factorization theory in Dedekind domains \cite{plagne2011application,borello2025geometry}.

In recent years, particularly with the advent of network coding \cite{silva2008rank,koetter2008coding} and the renewed interest in code-based cryptography \cite{alagic2020status}, the class of rank-metric codes has become a central topic of intense mathematical investigation. Linear rank-metric codes can be studied using geometric tools, by identifying them with linear sets in a projective space. This connection links, for instance, optimal codes with respect to the rank metric—known as MRD codes—to sets with good intersection properties with hyperplanes, called scattered linear sets with respect to hyperplanes \cite{sheekey2019linear}. In \cite{rankminimal_ABNR}, the authors clarify the relationship between another family of rank-metric codes, namely minimal codes, and linear sets that are strong blocking sets.

\medskip

In this paper, inspired by the importance of intersecting codes in the Hamming metric and the interest for linear rank-metric codes, we introduce for the first time the notion of intersecting codes in the rank metric, and we explore this concept from both a coding-theoretic and geometric perspective. 

\medskip

A rank-metric code is said to be \emph{intersecting} if the supports of any two nonzero codewords intersect nontrivially. In the first part of the paper, we highlight some basic properties that such rank-metric intersecting codes must satisfy, as well as their connections to Hamming-metric intersecting codes and to MRD codes, particularly in the regime where the code length is smaller than the extension degree of the field. We also compare rank-intersecting codes with minimal codes, showing that these are two distinct classes of codes—although there do exist codes that are both rank-intersecting and minimal. A simple yet fundamental property is the following: if the minimum distance of a code (that is, the smallest dimension of the support of a nonzero codeword) exceeds half the code's length, then the code is clearly rank-intersecting. This condition, which also holds in the Hamming-metric case (see for instance \cite{borello2025geometry}), depends solely on the code's weight distribution and serves as a sufficient condition. It can be viewed as an analogue of the well-known Ashikhmin–Barg condition for minimal codes \cite{ashikhmin1998minimal}.

Later in the paper, we show that a rank-intersecting code corresponds to a $q$-system (the vector space underlying a linear set) that is not 2-spannable, meaning it is not generated by the sum of its intersections with any two hyperplanes. This property provides a geometric perspective on the code, allowing us to derive further results—particularly bounds on parameters—and to develop new constructions. Notably, this geometric viewpoint enables the construction of intersecting codes that do not necessarily satisfy the aforementioned sufficient condition. For example, we relate intersecting codes to certain geometric structures known as $t$-clubs.
Regarding bounds, in addition to proving that the minimum distance must always be at least equal to the dimension, we show that a (nondegenerate) rank-intersecting code of dimension $k$ over $\mathbb{F}_{q^m}$ can exist only if its length $n$ satisfies the inequality $$2k - 1 \leq n \leq 2m - 3.$$ Currently, known constructions exist only for lengths in the range $2k - 1 \leq n \leq 2m - 2k + 1$, leaving a gray area that still needs to be explored.

In the final part of the paper, we examine the connections between rank-intersecting codes and other combinatorial structures, which we reinterpret in the rank-metric setting. Classical $(2,1)$-\emph{separating codes} (see for example \cite{randriambololona20132}) are codes in which no three vectors satisfy the triangle inequality with equality (with respect to the Hamming metric). Their linear version corresponds to Hamming-metric intersecting codes. Naturally, this definition can also be extended to the rank-metric context. In this case, we show that the class of linear rank-metric $(2,1)$-separating codes contains both rank-intersecting codes and minimal codes. The classical question about the maximum size of such codes (for a fixed length) finds a partial answer in Gabidulin codes of length $2k - 1$. Lastly, we explore the connection with \emph{frameproof codes} (see \cite{blackburn2003frameproof} for the Hamming-metric context). In particular, we provide a definition of descendants and prove that $2$-frameproof linear rank-metric codes coincide with rank-intersecting codes. It would be very interesting to explore potential applications of these codes to fingerprinting in the context of network coding.

\bigskip

\noindent \textbf{Outline.} In Section \ref{sec:background}, we recall the basic notions of rank-metric codes, MRD codes, and their geometric interpretation through linear sets and 
$q$-systems. In Section \ref{sec:rankinter}, we introduce rank-metric intersecting codes, establish their main properties, and compare them with minimal codes. Section \ref{sec:geom} provides a geometric interpretation of these codes via $2$-spannable 
$q$-systems, leading to new constructions. In Section \ref{sec:bounds}, we derive bounds on the parameters of rank-intersecting codes.
Section \ref{sec:separ} revisits the concept of 
$(2,1)$-separating codes in the rank-metric setting, showing how it encompasses both rank-intersecting and minimal codes. Finally, in Section \ref{sec:frameproof}, we study the connection with frameproof codes, proving that 2-frameproof linear rank-metric codes coincide with rank-intersecting codes.

\bigskip

\section{Background}\label{sec:background}

The aim of this section is to present all notions and notations which will be used in the rest of the paper.

\subsection{Rank-metric codes}
Let $\Gamma=\{\gamma_1,\ldots,\gamma_m\}$ be an $\Fq$-basis of $\Fqm$. For any vector $x=(x_1,\ldots,x_n) \in \Fqmn$, we denote by
$$\Mat_{\Gamma}(x) \in \F_q^{m\times n}$$ 
the matrix whose coefficients $a_{i,j}$ are such that, for all $i\in\{1,\ldots,n\}$,
$$x_i=\sum_{j=1}^m a_{i,j}\gamma_j,$$
that is the columns of $\Mat_{\Gamma}(x)$
represent the coordinates of $x$ in the basis $\Gamma$.
Note that the rank of $\Mat_{\Gamma}(x)$ does not depend on the choice of the basis $\Gamma$. Indeed, if $\Gamma'$ is another $\Fq$-basis of $\Fqm$, then $\Mat_{\Gamma'}(x) = A \cdot \Mat_{\Gamma}(x)$, where $A\in {\rm GL}(m,q)$. Hence, the following definition makes sense.

\begin{definition}\label{Def:rank}
The \emph{rank} of $x \in \Fqmn$, noted $\rk(x)$, is the rank of the matrix $\Mat_{\Gamma}(x)$.
\end{definition}

The rank function $x\mapsto \rk(x)$ induces a metric, for which the distance between $x,y\in \Fqmn$ is $\rk(x-y)$. This is the so-called \emph{rank metric}.

For the same reason as above, the rowspan of the rows of $\Mat_{\Gamma}(x)$ does not depend on $\Gamma$. 

\begin{definition}
The \emph{support} of  $x \in \Fqmn$ is $\sigma(x) = \rowspan(\Mat_{\Gamma}(x))$. \end{definition}

Clearly $\rk(x)=\dim_{\Fq} \sigma(x)$. Note that this is not the only possible definition of support (see for example \cite{gorla2021rank}), but it has the nice property of being invariant under nonzero scalar multiplication, that is $\sigma(\lambda x)=\sigma(x)$ for all $\lambda\in\Fqm^\ast$.\\

The main objects of these papers are the $\Fqm$-linear subspaces of $\Fqm^n$, endowed with the rank metric. In the literature, they are called in different ways, in order to distinguish them from the subspaces of matrices. In this paper, we will simply refer to them as rank-metric codes. 

\begin{definition}
A \emph{rank-metric code} $\C$ is a vector subspace of $\Fqmn$, endowed with the rank metric. Its \emph{length} is $n$, its \emph{dimension} is $k = \dim_{\Fqm}(\C)$, and its \emph{minimum distance} is ${\rm d}(\C) := \min_{c\in \C \setminus \{0\}} \rk(c)$.  
We say that $\C$ is an $\nkdm$ code.
A \emph{generator matrix} of $\C$ is an $k \times n$ matrix with coefficients in $\Fqm$ such that $\C = \rowspan(G)$.
\end{definition}

\begin{definition}
Two rank-metric codes $\C$ and $\C'$ of length $n$ over $\Fqm$ are said to be \emph{equivalent} if there exists $A \in \text{GL}(n, q)$ such that
$$\C' = \C \cdot A = \{ c \cdot A : c\in \C\}.$$
\end{definition}

As in the Hamming metric, two equivalent codes in the rank metric necessarily have the same parameters.

\begin{definition}
 An $\nkdm$ code $\mC$ is (\emph{rank-})\emph{degenerate} if $\sigma(\mC):=\sum_{c\in \C}\sigma(c)\neq \F_q^n$.
 We say that $\mC$ is (\emph{rank-})\emph{nondegenerate} if it is not degenerate. 
\end{definition}

\begin{proposition}[\cite{rankminimal_ABNR}]\label{prop:nondeg}
An $\nkdqm$ code is nondegenerate if and only if
the $\F_q$-span of the columns of any generator matrix of $\C$ has $\Fq$-dimension $n$.
\end{proposition}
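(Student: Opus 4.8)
The plan is to pass to orthogonal complements inside $\Fq^n$ with respect to the standard bilinear form, describe $\sigma(\C)^{\perp}$ in two ways, and compare dimensions. First I would handle a single vector: for $x\in\Fqmn$ with $x_i=\sum_{j=1}^{m}a_{i,j}\gamma_j$, the rows of $\Mat_{\Gamma}(x)$ are the vectors $(a_{1,j},\dots,a_{n,j})\in\Fq^n$, so a vector $v=(v_1,\dots,v_n)\in\Fq^n$ is orthogonal to all of them if and only if $\sum_{i}a_{i,j}v_i=0$ for every $j$. Since $v_i\in\Fq$ and $\Gamma$ is an $\Fq$-basis of $\Fqm$, expanding $\sum_i x_iv_i=\sum_j\bigl(\sum_i a_{i,j}v_i\bigr)\gamma_j$ shows this is equivalent to $\sum_i x_iv_i=0$. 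Hence
\[
\sigma(x)^{\perp}=\{\,v\in\Fq^n \St \langle x,v\rangle=0\,\},
\]
where $\langle\cdot,\cdot\rangle$ is the standard $\Fqm$-bilinear form on $\Fqm^n$ evaluated on vectors in $\Fq^n$.

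Next I would intersect over all codewords. Since $\sigma(\C)=\sum_{c\in\C}\sigma(c)$, we get $\sigma(\C)^{\perp}=\bigcap_{c\in\C}\sigma(c)^{\perp}=\{\,v\in\Fq^n \St \langle c,v\rangle=0 \text{ for all } c\in\C\,\}$. Fixing a generator matrix $G$ with rows $r_1,\dots,r_k$ and columns $g_1,\dots,g_n\in\Fqmk$, and using that $\langle\cdot,v\rangle$ is $\Fqm$-linear in the first slot (the entries of $v$ lie in $\Fq$), the condition $\langle c,v\rangle=0$ for all $c\in\C$ is equivalent to $\langle r_\ell,v\rangle=0$ for all $\ell$, i.e.\ to $Gv=0$, i.e.\ to $\sum_{i=1}^{n}v_ig_i=0$. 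Thus $\sigma(\C)^{\perp}$ is precisely the space of $\Fq$-linear relations among $g_1,\dots,g_n$, so rank-nullity for the map $v\mapsto Gv$ gives
\[
\dim_{\Fq}\sigma(\C)^{\perp}=n-\dim_{\Fq}\langle g_1,\dots,g_n\rangle_{\Fq}.
\]

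Finally, combining this with $\dim_{\Fq}\sigma(\C)+\dim_{\Fq}\sigma(\C)^{\perp}=n$ (nondegeneracy of the bilinear form on $\Fq^n$) yields $\dim_{\Fq}\sigma(\C)=\dim_{\Fq}\langle g_1,\dots,g_n\rangle_{\Fq}$. Since $\sigma(\C)\subseteq\Fq^n$, it equals $\Fq^n$ exactly when this common dimension is $n$, which is the asserted equivalence. The computation is valid for every choice of generator matrix, so the universal quantifier in the statement is automatic; alternatively one observes directly that replacing $G$ by $AG$ with $A\in\GL(k,q^m)$ leaves the $\Fq$-dimension of the column span unchanged.

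The only delicate point is the first step — transporting the description of $\sigma(x)$ as an $\Fq$-row span of $\Mat_{\Gamma}(x)$ into the vanishing of the pairing $\langle x,v\rangle$ — together with keeping the rows-versus-columns bookkeeping of $G$ and $\Mat_{\Gamma}$ straight; once that identification is in place, the rest is routine rank-nullity and orthogonal-complement manipulation.
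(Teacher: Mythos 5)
Your argument is correct: the identification of $\sigma(x)^{\perp}$ with the vanishing locus of the $\Fqm$-bilinear pairing, followed by rank--nullity for $v\mapsto\sum_i v_ig_i$, gives the stronger identity $\dim_{\Fq}\sigma(\C)=\dim_{\Fq}\langle g_1,\dots,g_n\rangle_{\Fq}$, from which the stated equivalence is immediate. The paper itself cites this proposition from \cite{rankminimal_ABNR} without reproducing a proof, and your duality argument is essentially the standard one found there, so there is nothing to flag.
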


In the next sections, we will often see the following family of codes, introduced in \cite{rankminimal_ABNR}.

\begin{definition}
An $\nkdqm$ code is minimal if for every $c,c'\in \C$, $\sigma(c)\subseteq \sigma(c')$ implies $c'=\lambda c$ for some $\lambda\in \Fqm$.
\end{definition}

\subsection{MRD codes}

An analogue of the Singleton bound holds in the rank metric (see for example \cite{gorla2021rank}).

\begin{theorem}[Singleton bound in rank metric]
Let $\C$ be an $\nkdm$ code. Then
$$km \leq \max(m,n) \cdot (\min(n,m)-d+1).$$
\end{theorem}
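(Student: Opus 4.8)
The plan is to move the problem into matrices and run a puncturing argument there. Through the $\F_q$-linear isomorphism $\Mat_\Gamma\colon\Fqmn\to\F_q^{m\times n}$ the code $\C$ becomes an $\F_q$-subspace $D:=\Mat_\Gamma(\C)\subseteq\F_q^{m\times n}$ with $\dim_{\F_q}D=km$ (since $[\Fqm:\F_q]=m$), and by Definition~\ref{Def:rank} every nonzero matrix in $D$ has rank at least $d$; assuming $\C\ne\{0\}$ (otherwise there is nothing to prove) and using that any nonzero matrix in $\F_q^{m\times n}$ has rank at most $\min(m,n)$, we get $d\le\min(m,n)$. Because matrix rank is transposition-invariant and the right-hand side of the claimed inequality is symmetric in $m$ and $n$, I would first normalise so that the number of columns equals $\min(m,n)$: if $n>m$, replace $D$ by $\{M^{\top}:M\in D\}$. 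In either case we end up with an $\F_q$-subspace $E\subseteq\F_q^{b\times a}$, where $a=\min(m,n)$, $b=\max(m,n)$, with $\dim_{\F_q}E=km$ and all nonzero elements of rank $\ge d$.

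Next I would take the $\F_q$-linear projection $\pi\colon\F_q^{b\times a}\to\F_q^{b\times(a-d+1)}$ that deletes all but the first $a-d+1$ columns (well defined since $a-d+1\ge 1$). If $M\in E\cap\ker\pi$, then $M$ has at most $a-(a-d+1)=d-1$ nonzero columns, so $\rk(M)\le d-1$, and minimality of $d$ forces $M=0$. Thus $\pi|_E$ is injective, and comparing $\F_q$-dimensions yields
$$km=\dim_{\F_q}E\le\dim_{\F_q}\F_q^{b\times(a-d+1)}=b\,(a-d+1)=\max(m,n)\,\bigl(\min(m,n)-d+1\bigr),$$
which is the assertion.

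I do not expect any genuine obstacle: the computation is short, and the only step needing a little care is the normalisation, where transposing destroys the $\Fqm$-linear structure — but this is harmless, since from that point on the argument uses only the $\F_q$-dimension $km$ and the ranks of individual matrices, both unaffected by transposition. As a remark, one could try to avoid matrices and argue with a generator matrix $G\in\Fqm^{k\times n}$ directly: deleting any $d-1$ columns of $G$ keeps the rows $\Fqm$-linearly independent (otherwise some nonzero codeword would be supported on $\le d-1$ columns and have rank $<d$), giving $k\le n-d+1$ and hence $km\le m(n-d+1)$; but this only recovers the bound with $m$ in place of $\max(m,n)$, which is weaker when $n>m$. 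The matrix/transpose route is precisely what is needed to capture the $\min$–$\max$ asymmetry.
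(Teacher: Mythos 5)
Your argument is correct: the paper itself gives no proof of this bound (it defers to the cited reference), and your puncturing argument on the associated $\F_q$-linear matrix code --- transposing when $n>m$ and then showing the projection onto the first $\min(m,n)-d+1$ columns is injective --- is exactly the standard proof found there. Your closing remark is also accurate: the generator-matrix argument over $\Fqm$ only yields $km\le m(n-d+1)$, which is strictly weaker than the stated bound precisely when $n>m$ and $d>1$, so the passage through matrices and transposition is genuinely needed.
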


Codes that achieve this bound are called \emph{MRD codes} (maximum rank distance codes). While the study of MRD codes in general is an active area of research, Gabidulin codes provide a simple example of such codes if $n\leq m$. They are the analogue of Reed-Solomon codes in the rank metric.

We begin by defining linearized polynomials.

\begin{definition}
A \emph{linearized polynomial} of $q$-\emph{degree} $k$ is a polynomial $P(X)\in \Fqm[X]$ of the form
$$P(X)=p_{0} + p_{1}X^{q} + p_{2} X^{q^{2}} + \dots + p_{k} X^{q^{k}}.$$
\end{definition}

Let $\mathcal{L}_{q^m}[X]$ be the set of linearized polynomials with coefficients in $\mathbb{F}_{q^m}$.

For $n \leq m$, let $a_{1}, \dots, a_{n} \in \Fqm$ be $\Fq$-linearly independent elements. A Gabidulin code of length $n$ and dimension $k$ is defined as
$${\rm Gab}_{n, k}(a_1,\ldots,a_n)= \{ (P(a_{1}), \dots, P(a_{n})) : P \in \mathcal{L}_{q^m}[X], \text{ of } q\text{-degree} < k \}.$$

A generator matrix of ${\rm Gab}_{n, k}(a_1,\ldots,a_n)$ is
$$G = \begin{pmatrix}
a_{1} & \dots & a_{n} \\
a_{1}^{q} & \dots & a_{n}^{q} \\
\vdots & \vdots & \vdots \\
a_{1}^{q^{k-1}} & \dots & a_{n}^{q^{k-1}}
\end{pmatrix}.$$
This type of matrix is called a \emph{Moore matrix}, and it has full rank if and only if $a_{1}, \dots, a_{n}$ are $\Fq$-linearly independent. The Gabidulin code ${\rm Gab}_{n, k}(a_1,\ldots,a_n)$ has parameters $[n, k, n-k+1]_{q^{m}/q}$ and it is an MRD code.

\subsection{Linear sets and $q$-systems}

Let $\mC$ be a nondegenerate $[n,k,d]_{q^m/q}$ code. Let $G$ denote a generator matrix of $\mC$, and let $g_1, \dots, g_n \in \F_{q^m}^k$ be its column vectors. Define the $\F_q$-linear subspace
\[
\mU_G := \langle g_1, \ldots, g_n \rangle_{\F_q},
\]
which has $\F_q$-dimension $n$ (by Proposition \ref{prop:nondeg}), and satisfies $\langle \mU_G \rangle_{\F_{q^m}} = \F_{q^m}^k$. This subspace $\mU_G$ is naturally referred to as the $[n,k]_{q^m/q}$-\emph{system} (or simply $q$-\emph{system}) associated with $\mC$.  

Two $[n,k]_{q^m/q}$ systems $\mU$ and $\mU'$ are said to be \emph{equivalent} if there exists an $\F_{q^m}$-linear isomorphism $\varphi: \F_{q^m}^k \to \F_{q^m}^k$ such that $\varphi(\mU) = \mU'$. It is clear that if $G$ and $G'$ are two generator matrices of the same code $\mC$, then the corresponding systems $\mU_G$ and $\mU_{G'}$ are equivalent. For this reason, we may, with slight abuse of notation, denote the system simply by $\mU$, omitting explicit reference to $G$. Furthermore, it can be easily shown that equivalent codes yield equivalent systems (see the appendix of~\cite{rankminimal_ABNR} for a detailed discussion).

The $q$-system associated with a rank-metric code is intimately linked with certain geometric configurations known as \emph{linear sets}. These structures were first introduced by Lunardon in~\cite{lunardon1999normal} in the context of constructing blocking sets, and have since been widely studied. For a comprehensive overview, see~\cite{polverino2010linear}.

\begin{definition}
Let $\mU$ be an $\F_q$-subspace of $\F_{q^m}^k$ of $\mathbb{F}_q$-dimension $k$. The $\F_q$-\emph{linear set} of rank $n$ associated to $\mU$ is defined as
\[
L_\mU := \left\{ \langle u \rangle_{\F_{q^m}} \;: \; u \in \mU \setminus \{0\} \right\} \subseteq \PG(k-1, q^m).
\]
Two such linear sets are called \emph{equivalent} if their underlying systems are $P\Gamma L$-equivalent.
\end{definition}

\begin{remark}
The classical definition of a linear set does not require that $\langle \mU \rangle_{\F_{q^m}} = \F_{q^m}^k$. If the dimension over $\F_{q^m}$ is $h < k$, we may assume $\mU \subseteq \F_{q^m}^h$ without loss of generality, and consider $L_\mU$ as a subset of $\PG(h-1, q^m)$.
\end{remark}

\begin{definition}
Let $\mathcal{W}$ be an $\F_{q^m}$-subspace of $\F_{q^m}^k$, and denote by $\Lambda = \PG(\mathcal{W}, \F_{q^m})$ the corresponding projective subspace. The \emph{weight of $\Lambda$ in $L_\mU$} is defined as
\[
\wt_\mU(\Lambda) := \dim_{\F_q}(\mU \cap \mathcal{W}).
\]
In the same way, we may define the weight of $\mathcal{W}$ in $\mU$.
\end{definition}

There is an important relation between the rank metric and the geometry of the linear set or of the $q$-system associated to a rank-metric code (see, e.g., \cite{rankminimal_ABNR}): 
\begin{equation}\label{eq:weight}
   \rk(uG)=n-\dim_{\Fq}(\mU_G\cap \langle u\rangle^\perp)=n-\wt_{\mU_G}(\langle u\rangle^\perp).
\end{equation}
This means that the rank metric can be inferred by studying intersections of the linear set with hyperplanes.

\noindent For any system $\mU$, the associated linear set satisfies
\[
|L_\mU| \leq \frac{q^n - 1}{q - 1}.
\]
A linear set attaining equality in this bound is called \emph{scattered} (and $U$ is said to be \emph{scattered}). Equivalently, $L_\mU$ is scattered if and only if every point $P \in L_\mU$ has weight $\mathrm{wt}_\mU(P) = 1$. The notion of scatteredness can be extended, as introduced in~\cite{sheekey2020rank}, to define \emph{$h$-scattered linear sets} (or \emph{$h$-scattered subspaces}), which are scattered with respect to $\F_{q^m}$-vector subspaces of dimension $h$. In particular, $L_\mU \subseteq \PG(k-1, q^m)$ is said to be \emph{scattered with respect to hyperplanes} if $\mathrm{w}_\mU(\mathcal{H}) \leq k - 1$ for every hyperplane $\mathcal{H}$ in $\PG(k-1, q^m)$. In the regime $n\leq m$, by \eqref{eq:weight} this last property is equivalent to the code being MRD.

\bigskip

\section{Rank-metric intersecting codes}\label{sec:rankinter}

As we mentioned in the introduction, intersecting codes are well-studied objects in the literature, for their connections with many other fields of mathematics and for their applications in the real life. It seems quite natural to define their analogue in the rank metric.

\begin{definition} \label{def:rank_intersecting}
A code $\C$ is \emph{rank-metric intersecting} if for any $ c, c' \in \C \setminus \{0\}$ we have
$$\sigma(c) \cap \sigma(c') \neq \{0\}.$$
\end{definition}

\begin{proposition}\label{prop:equivalence}
Let $\C$ be a rank-metric intersecting code, and let $\C'$ be a code equivalent to $\C$. Then $\C'$ is also rank-metric intersecting.
\end{proposition}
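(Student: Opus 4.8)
The plan is to reduce the statement to understanding how the support $\sigma(c)$ transforms under the equivalence $A \in \GL(n,q)$. Recall that two codes $\C$ and $\C'$ are equivalent if $\C' = \C \cdot A$ for some $A \in \GL(n,q)$, so every codeword of $\C'$ has the form $c \cdot A$ for a unique $c \in \C$, and the map $c \mapsto c\cdot A$ is an $\Fqm$-linear bijection $\C \to \C'$ sending $0$ to $0$. Thus it suffices to show that $\sigma(c \cdot A)$ and $\sigma(c)$ are related in a way that preserves the property of having nonzero pairwise intersection.

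First I would make explicit the action of $A$ on the matrix representation. Fixing an $\Fq$-basis $\Gamma$ of $\Fqm$, for $x \in \Fqmn$ one has $\Mat_\Gamma(x \cdot A) = \Mat_\Gamma(x) \cdot A$, since the entries of $x \cdot A$ are $\Fq$-linear combinations of the entries of $x$ with coefficients given by $A$, and these $\Fq$-linear combinations act columnwise on $\Mat_\Gamma(x)$ exactly as right multiplication by $A$. Consequently $\sigma(x \cdot A) = \rowspan(\Mat_\Gamma(x) \cdot A) = \rowspan(\Mat_\Gamma(x)) \cdot A = \sigma(x) \cdot A$, where we use that right multiplication by an invertible matrix $A$ commutes with taking the row space as an $\Fq$-subspace of $\Fq^n$. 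In other words, the linear isomorphism $\rho_A : \Fq^n \to \Fq^n$, $v \mapsto v \cdot A$, satisfies $\sigma(c \cdot A) = \rho_A(\sigma(c))$ for every $c$.

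Now the conclusion is immediate: given nonzero $c', c'' \in \C'$, write $c' = c_1 \cdot A$ and $c'' = c_2 \cdot A$ with $c_1, c_2 \in \C \setminus \{0\}$. Since $\C$ is rank-metric intersecting, $\sigma(c_1) \cap \sigma(c_2) \neq \{0\}$; pick $0 \neq v \in \sigma(c_1) \cap \sigma(c_2)$. Then $\rho_A(v) \neq 0$ (as $\rho_A$ is a bijection fixing $0$), and
\[
\rho_A(v) \in \rho_A(\sigma(c_1)) \cap \rho_A(\sigma(c_2)) = \sigma(c_1 \cdot A) \cap \sigma(c_2 \cdot A) = \sigma(c') \cap \sigma(c''),
\]
so $\sigma(c') \cap \sigma(c'') \neq \{0\}$, and $\C'$ is rank-metric intersecting.

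The only genuine point requiring care — and hence the ``main obstacle,'' though it is more of a routine verification than a real difficulty — is the identity $\Mat_\Gamma(x \cdot A) = \Mat_\Gamma(x) \cdot A$ and the resulting behaviour of the support under right multiplication by $A$; once this is pinned down, everything else is formal. I would state this as a small lemma (or an inline observation) before giving the two-line argument above, possibly also remarking that the same reasoning shows $\sigma(\C') = \sigma(\C)\cdot A$, so that equivalence also preserves (non)degeneracy, which is consistent with the discussion already in the background section.
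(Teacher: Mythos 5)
Your proof is correct and follows essentially the same route as the paper's: both rest on the identity $\sigma(c \cdot A) = \sigma(c) \cdot A$ and then transport a nonzero vector of $\sigma(c_1)\cap\sigma(c_2)$ through the bijection $v\mapsto v\cdot A$. The only difference is that you justify the key identity via $\Mat_\Gamma(x\cdot A)=\Mat_\Gamma(x)\cdot A$, which the paper simply asserts.
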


\begin{proof}
Let us first note that since $\C'$ is equivalent to $\C$, there exists $A \in \GL(n,q)$ such that $\C' = \{c \cdot A : c \in \C\}$. Let $c, c' \in \C$. Since $\C$ is intersecting, there exists $x \in \sigma(c) \cap \sigma(c')$, $x\neq 0$. On the other hand, $c\cdot A$ and $c'\cdot A$ are codewords of $\C'$. Now, $\sigma(c\cdot A) = \sigma(c) \cdot A$, so that $x \cdot A \in \sigma(c \cdot A)$ and $x \cdot A \in \sigma(c'\cdot A)$. Hence $\sigma(c\cdot A) \cap \sigma(c'\cdot A) \neq \{0\}$, which implies that $\C'$ is intersecting.
\end{proof}

\begin{proposition}\label{prop:rankinthammingint}
If a code is rank-metric intersecting, then it is also Hamming-metric intersecting.
\end{proposition}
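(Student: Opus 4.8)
The plan is to translate the rank-metric support condition into a Hamming-metric support condition. The key observation is the relationship between the two notions of support: for $x = (x_1,\dots,x_n) \in \Fqmn$, the rank support $\sigma(x)$ is the rowspan of $\Mat_\Gamma(x)$, while the Hamming support $\supp(x)$ is the set of indices $i$ with $x_i \neq 0$. Each coordinate $x_i$ contributes the column vector of its coordinates in the basis $\Gamma$; so column $i$ of $\Mat_\Gamma(x)$ is zero precisely when $i \notin \supp(x)$. Since $\sigma(x)$ is the \emph{row} space, I first want to relate it to the \emph{column} structure. The cleanest way is to observe that $\rk(x) = \dim_{\Fq}\sigma(x)$ equals the column rank, so $\sigma(x) \subseteq \Fq^n$ is contained in the coordinate subspace spanned by $\{e_i : i \in \supp(x)\}$ — indeed every row of $\Mat_\Gamma(x)$ has support contained in $\supp(x)$, hence the rowspan does too.

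First I would establish this containment $\sigma(x) \subseteq \langle e_i : i \in \supp(x)\rangle_{\Fq}$ for every $x$, which is immediate from the above. Next, given a rank-intersecting code $\C$ and two nonzero codewords $c, c'$, I take a nonzero $v \in \sigma(c) \cap \sigma(c')$. By the containment, $v$ is supported on $\supp(c)$ and also on $\supp(c')$. Since $v \neq 0$, it has some nonzero coordinate, say at index $j$; then $j \in \supp(c)$ and $j \in \supp(c')$, so $\supp(c) \cap \supp(c') \neq \emptyset$. This is exactly the Hamming-intersecting condition, so $\C$ is Hamming-metric intersecting.

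I expect the only subtlety — and it is mild — to be making sure the direction of the containment is right: one must use that $\sigma(x)$ is defined as the \emph{row} span of $\Mat_\Gamma(x)$, and that the $i$-th column vanishing forces every row to vanish in position $i$. There is no real obstacle here; the statement is essentially a bookkeeping consequence of the definitions, and the proof is a couple of lines. One could alternatively phrase it via the identity $\rk(x) \le |\supp(x)|$ together with the fact that deleting the zero columns of $\Mat_\Gamma(x)$ does not change its rowspan up to the natural coordinate embedding, but the direct containment argument above is the most transparent and is what I would write.
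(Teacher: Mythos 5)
Your argument is correct and rests on exactly the same key observation as the paper's proof, namely the containment $\sigma(x)\subseteq\langle e_i : x_i\neq 0\rangle_{\Fq}$ coming from the fact that a zero coordinate $x_i$ forces the $i$-th column of $\Mat_\Gamma(x)$ to vanish. The only difference is that the paper phrases it contrapositively (disjoint Hamming supports force trivially intersecting rank supports) while you argue the direct implication; the content is identical.
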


\begin{proof}
Let suppose that a code $\C$ is not Hamming-metric intersecting. This means that there exists $c,c'\in \C$ such that
$$\{i:c_i\neq 0\} \cap \{i:c'_i\neq 0\}=\varnothing.$$
Now, $\sigma(c)\subseteq \langle e_j:j\in \{i:c_i\neq 0\}\rangle$ and $\sigma(c')\subseteq \langle e_j:j\in \{i:c'_i\neq 0\}\rangle$,
where $e_1,\ldots,e_n$ is the canonical basis of $\F_q^n$. Hence, $\sigma(c)\cap \sigma(c')=\{0\}$, a contradiction.
\end{proof}

\begin{corollary}\label{Cor:Nec}
If an $\nkdm$ code is rank-metric intersecting, then $n \geq 2k - 1$.
\end{corollary}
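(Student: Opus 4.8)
The plan is to reduce the statement to the Hamming metric and then prove (a short version of) the analogous bound there. By Proposition~\ref{prop:rankinthammingint}, a rank-metric intersecting code is Hamming-metric intersecting, so it suffices to show that an $\F_{q^m}$-linear $[n,k]$ code that is Hamming-metric intersecting must satisfy $n\geq 2k-1$. In fact the argument uses nothing about the rank metric or the base field beyond the $\F_{q^m}$-dimension and the length, so we may work purely with the Hamming supports $\{i:c_i\neq 0\}$.

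For the Hamming statement I would argue by contradiction: assume $n\leq 2k-2$ and fix a generator matrix $G$ of $\C$. Partition $\{1,\dots,n\}$ into two blocks $S_1,S_2$ with $|S_1|,|S_2|\leq k-1$, which is possible precisely because $n\leq 2(k-1)$. For $i=1,2$, the $\F_{q^m}$-linear map $\C\to \F_{q^m}^{S_i}$ sending a codeword to its restriction to the coordinates in $S_i$ has image of dimension at most $|S_i|\leq k-1<k=\dim_{\F_{q^m}}\C$, hence a nonzero kernel; pick a nonzero codeword $c^{(i)}$ in this kernel. Since $c^{(i)}$ vanishes on the coordinates in $S_i$, its Hamming support is contained in the complementary block, so $\{j: c^{(1)}_j\neq 0\}\subseteq S_2$ and $\{j: c^{(2)}_j\neq 0\}\subseteq S_1$; these two supports are disjoint, and both codewords are nonzero, contradicting the Hamming-intersecting property. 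Hence $n\geq 2k-1$. (Equivalently, one can first observe that every nonzero codeword of a Hamming-intersecting code must have Hamming weight at least $k$ — otherwise shortening at its support leaves a nontrivial code, which furnishes a codeword with disjoint Hamming support — and then apply the Singleton bound $d\leq n-k+1$ to conclude $k\leq n-k+1$.)

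There is no genuine obstacle here; the only points requiring care are that the codewords $c^{(1)},c^{(2)}$ are genuinely nonzero, which is exactly why we need the strict inequality $|S_i|<k$, and that the entire argument is carried out in terms of the Hamming supports, so that Proposition~\ref{prop:rankinthammingint} applies verbatim without any further input from the rank-metric structure.
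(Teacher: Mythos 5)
Your proposal is correct. The first step is exactly the paper's: invoke Proposition~\ref{prop:rankinthammingint} to pass from the rank metric to the Hamming metric. Where you diverge is in how the Hamming-metric bound $n\geq 2k-1$ is then established: the paper simply cites an external result (Theorem~3.2 of \cite{borello2025geometry}) as a black box, whereas you supply a short self-contained argument — partition the coordinates into two blocks of size at most $k-1$, extract a nonzero codeword vanishing on each block via a dimension count on the restriction maps, and observe that the two resulting Hamming supports are disjoint. Your argument is sound: the only delicate point, that $c^{(1)}$ and $c^{(2)}$ are nonzero, is exactly where $|S_i|\leq k-1<k$ is used, and you flag this correctly. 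Your parenthetical alternative (every nonzero codeword of a Hamming-intersecting code has weight at least $k$, then apply the Singleton bound) is also valid and is in fact the rank-metric analogue of what the paper later proves geometrically in Theorem~\ref{thm:bounddistance}. What your route buys is independence from the cited reference, making the corollary self-contained; what the paper's route buys is brevity and consistency with its general policy of importing the Hamming-metric theory of intersecting codes from \cite{borello2025geometry} (compare the equivalence $(2)\Leftrightarrow(3)$ in Theorem~\ref{thm:rank_int_geo}, which cites the same source). Either is acceptable; no gap.
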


\begin{proof}
This is a direct consequence \cite[Theorem 3.2]{borello2025geometry}.
\end{proof}

We give now a simple sufficient condition for a code to be rank-metric intersecting.

\begin{theorem}\label{thm:sufficient}
Let $\C$ be an $\nkdm$ code. If $2d > n$, then $\C$ is rank-metric intersecting.
\end{theorem}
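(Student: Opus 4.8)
The plan is to argue by contradiction using a dimension count on the supports. Suppose $\C$ is not rank-metric intersecting, so there exist nonzero $c, c' \in \C$ with $\sigma(c) \cap \sigma(c') = \{0\}$. Since $\sigma(c), \sigma(c')$ are $\Fq$-subspaces of $\Fq^n$ with $\dim_{\Fq}\sigma(c) = \rk(c)$ and $\dim_{\Fq}\sigma(c') = \rk(c')$, the trivial intersection forces
\[
\rk(c) + \rk(c') = \dim_{\Fq}\sigma(c) + \dim_{\Fq}\sigma(c') = \dim_{\Fq}\bigl(\sigma(c) + \sigma(c')\bigr) \leq n.
\]
On the other hand, since $c$ and $c'$ are nonzero codewords, both have rank at least $d = {\rm d}(\C)$, so $\rk(c) + \rk(c') \geq 2d > n$, a contradiction. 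Hence no such pair exists and $\C$ is rank-metric intersecting.

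The argument is essentially a one-line dimension inequality, so I do not anticipate any genuine obstacle; the only point worth stating carefully is the passage from ``supports intersect trivially'' to ``the sum of the support dimensions is at most $n$'', which is just the Grassmann identity $\dim(V \cap W) + \dim(V + W) = \dim V + \dim W$ applied inside $\Fq^n$ together with $\sigma(c) + \sigma(c') \subseteq \Fq^n$. It is also worth remarking (either in the proof or immediately after) that this mirrors the Hamming-metric statement, where the same count is run on the coordinate supports; here it is run on the row spaces of the associated matrices, using $\rk(x) = \dim_{\Fq}\sigma(x)$ from Definition~\ref{Def:rank} and the line following the definition of support. No hypothesis beyond $2d > n$ and linearity is needed, and in particular the result does not require nondegeneracy of $\C$.
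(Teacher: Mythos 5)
Your proof is correct and is essentially the paper's own argument: both reduce the claim to Grassmann's formula applied to the two supports inside $\Fq^n$, with $\dim_{\Fq}\sigma(c),\dim_{\Fq}\sigma(c')\geq d$ and $2d>n$ forcing a nontrivial intersection. You merely phrase it contrapositively, which changes nothing of substance.
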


\begin{proof}
Let $c, c' \in \C \setminus \{0\}$. It is clear that $\dim_{\Fq} \sigma(c) \geq d$ and $\dim_{\Fq} \sigma(c') \geq d$. Therefore, since $\sigma(c)$ and $\sigma(c')$ are subspaces of $\Fqn$, and since $2d > n$, we have $\sigma(c) \cap \sigma(c') \neq \{0\}$ by Grassmann's formula, and thus $\C$ is rank-metric intersecting.
\end{proof}

\begin{example}
Let $\C$ be a $[3,2,2]_{8/2}$ code generated by
$$G=\begin{pmatrix}
    1 & \alpha & \alpha^2\\
    1 & \alpha^2 & \alpha^4
\end{pmatrix},$$
where $\alpha$ is a primitive element in $\F_8$. This is actually ${\rm Gab}_{3,2}(1,\alpha,\alpha^2)$. The support of any nonzero codeword is subspace of $\F_2^3$ of dimension at least $2$. According to Grassmann's formula, any two such subspaces must intersect in a subspace of dimension at least $1$, so that $\C$ is rank-metric intersecting
\end{example}

\begin{remark}\label{rmk:nonnecessary}
The condition of Theorem \ref{thm:sufficient} is only sufficient and in general not necessary. For example, the $[4, 2, 2]_{16/2}$ code with
generator matrix:
$$G=\begin{pmatrix}
    \alpha&\alpha^2&\alpha^3&0\\
    0&0&1&\alpha
\end{pmatrix},$$
where $\alpha^4=\alpha+1$, is rank-metric intersecting even if its minimum distance is not greater than $2$.
\end{remark}

For MRD codes the condition of Theorem \ref{thm:sufficient} is also necessary.

\begin{proposition}
Let $n \leq m$. An MRD code is rank-metric intersecting if and only if $2d > n$. 
\end{proposition}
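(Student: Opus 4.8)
The plan is to prove the two implications separately, noting that one of them is already available. The direction ``if $2d > n$ then $\C$ is rank-metric intersecting'' is exactly the content of Theorem \ref{thm:sufficient}, and it does not even use the MRD hypothesis; so nothing further is needed there.

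For the converse, the first step is to put the minimum distance of an MRD code with $n \leq m$ into closed form. Since $\min(n,m) = n$ and $\max(n,m) = m$, the Singleton bound in the rank metric specializes to $km \leq m(n - d + 1)$, i.e. $d \leq n - k + 1$; as $\C$ is MRD it attains equality, so $d = n - k + 1$. The second step is to invoke Corollary \ref{Cor:Nec}: since $\C$ is rank-metric intersecting, its parameters satisfy $n \geq 2k - 1$. Combining, $2d = 2(n - k + 1) = n + (n - 2k + 2) \geq n + 1 > n$, which is precisely the inequality claimed.

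The whole argument is thus a short bookkeeping combination of the rank-metric Singleton bound with Corollary \ref{Cor:Nec} (which itself rests on Proposition \ref{prop:rankinthammingint} and the cited Hamming-metric bound of \cite{borello2025geometry}). I do not anticipate a genuine obstacle; the only points that deserve a line of verification are that the Singleton bound really forces $d = n - k + 1$ in the regime $n \leq m$, so that ``$2d > n$'' becomes equivalent to ``$n \geq 2k-1$'' for MRD codes, and that the hypotheses of Corollary \ref{Cor:Nec} are met verbatim (which they are, since that corollary applies to any rank-metric intersecting $[n,k,d]_{q^m/q}$ code).
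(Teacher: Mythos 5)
Your proposal is correct and follows essentially the same route as the paper: Theorem \ref{thm:sufficient} for the ``if'' direction, and Corollary \ref{Cor:Nec} combined with the MRD equality $d = n-k+1$ for the ``only if'' direction. The only cosmetic difference is that you expand $2d = 2(n-k+1)$ directly, whereas the paper writes $2d \geq k + d = n+1$; both are the same bookkeeping.
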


\begin{proof}
Let $\C$ be an $[n, k, n-k+1]_{q^{m}/q}$ code.  

If $2d > n$, then the code is clearly rank-metric intersecting by Theorem~\ref{thm:sufficient}.

If the code is rank-metric intersecting, then by Corollary \ref{Cor:Nec} we have $n>2k-1$, or equivalently $d \geq k$, which implies $2d \geq k + d = n-k + k + 1 = n+1$, hence $2d > n$.
\end{proof}

\begin{remark}
    Note that in the case of MRD codes, the condition $2d > n$ is equivalent to $n \geq 2k - 1$.
\end{remark}

\begin{remark}\label{rmk:min-int}
In the rank metric, the comparison between intersecting codes and minimal codes is more subtle than in the Hamming metric.

Indeed, while every minimal code is intersecting in the Hamming metric, this is certainly not true in the rank metric. The best example of this phenomenon is the $[km,k,m]_{q^m/q}$ simplex code, whose generator matrix is, up to equivalence,
$$
G = \begin{pmatrix}
I_{k} & \alpha\cdot I_{k} & \alpha^{2}\cdot I_{k} & \cdots & \alpha^{m-1}\cdot I_{k} \\
\end{pmatrix},$$
with $\alpha$ a primitive element of $\Fqm$ and $I_k$ the $k\times k$ identity matrix.
All codewords of the simplex code have the same weight \cite{rankminimal_ABNR, randrianarisoa2020geometric}, which implies that it is minimal. However, the simplex code is clearly not rank-metric intersecting for $k\geq 2$: if $e_1,e_2$ are the first elements of the canonical basis of $\Fqm^k$, it is easy to observe that 
$$\sigma(e_1G)\cap \sigma(e_2G)=\{0\}.$$
There exist minimal codes which are also rank-metric intersecting: the $[9,3,5]_{128/2}$ code with generator matrix
$$G:=\begin{pmatrix}
1&\alpha&\alpha^2&\alpha^3&\alpha^4&\alpha^5&\alpha^6&0&0\\
1&\alpha^2&\alpha^4&\alpha^6&\alpha^8&\alpha^{10}&\alpha^{12}&1&0\\
1&\alpha^4&\alpha^8&\alpha^{12}&\alpha^{16}&\alpha^{20}&\alpha^{24}&0&1\\
\end{pmatrix},$$
with $\alpha$ a primitive element of $\F_{128}$, whose $2$-system associated is 
$$\mU:=\{(x,x^2+a,x^4+b):x\in \F_{128},a,b\in \F_2\},$$
which is scattered by \cite[Theorem 3.14]{lia2024short}, is minimal (by \cite[Theorem 6.3]{rankminimal_ABNR}) and also rank-metric intersecting, by Theorem \ref{thm:sufficient}.

\medskip

Despite these differences, we will observe a similarity: regarding the values of the length, there exists a regime in which rank-metric intersecting codes cannot exist, a gray area, and a regime in which rank-metric intersecting codes are known to exist. This is similar for minimal codes. From this perspective, our work is similar to that carried out in \cite{rankminimal_ABNR}.
\end{remark}

\bigskip

\section{A geometric interpretation}\label{sec:geom}

We aim to determine a geometric interpretation for intersecting codes in the rank metric. The geometric perspective in coding theory is classical and has proven fruitful in the last decades. For example, the well-known MDS conjecture was formulated in geometric terms, specifically, as a problem about arcs in projective space \cite{segre1955curve}. Other classical structures in coding theory, such as covering codes and minimal codes, can also be interpreted geometrically, as saturating sets \cite{davydov2011linear} or strong blocking sets \cite{alfarano2022geometric} respectively. This geometric approach has also yielded significant insights in the rank-metric setting; see, for example, \cite{randrianarisoa2020geometric,sheekey2019linear,rankminimal_ABNR, bonini2023saturating,bartoli2024saturating}.

In light of the discussion on the rank metric in Section \ref{sec:background}, it is natural to think that the geometric analogue of intersecting codes in the rank metric will take the form of  $q$-systems (or linear sets) with a prescribed property.

\begin{definition}
Let $\mU$ be an $[n,k]_{q^m/q}$ system. We say that $\mU$ is $2$-spannable if there exist two $\Fqm$-linear hyperplanes $\mH_{1}, \mH_{2}$ of $\Fqmk$ such that
$$\mU =  \mH_{1} \cap \mU  +  \mH_{2} \cap \mU .$$
\end{definition}

One could easily generalize this property and define $j$-spannable $q$-systems, with $j \geq 1$. However, this generalization is unnecessary for the purposes of this work.

\begin{remark}\label{rmk:weight}
Note that if $\mU$ is a $2$-spannable $[n,k]_{q^m/q}$ system, then there exist two $\Fqm$-linear hyperplanes $\mH_{1}, \mH_{2}$ of $\Fqmk$ such that
$$n \leq \wt_{\mU}(\mH_{1})+\wt_{\mU}(\mH_{1}).$$
In particular, there exists a hyperplane $\mH$ such that 
$$\wt_{\mU}(\mH)\geq \frac{n}{2}.$$
Hence, if for every hyperplane $\mH$ 
we have $\wt_\mU(\mH) < \frac{n}{2}$,
then $\mU$ is not $2$-spannable. This is a reformulation of Theorem \ref{thm:sufficient} (by \eqref{eq:weight}).
\end{remark}

\begin{proposition}\label{prop:boundspannable}
Let $\mU$ be an $[n,k]_{q^m/q}$ system. If $n \leq 2k - 2$, then $\mU$ is $2$-spannable.
\end{proposition}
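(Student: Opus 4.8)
The plan is to show that any $[n,k]_{q^m/q}$ system $\mU$ with $n = \dim_{\Fq}\mU \leq 2k-2$ can be split as a sum of two parts, each of $\Fq$-dimension at most $k-1$ over $\Fq$ but whose $\Fqm$-span is contained in a hyperplane of $\Fqmk$, so that $\mU$ equals the sum of its intersections with those two hyperplanes. The natural first step is to pick an $\Fq$-basis $u_1,\dots,u_n$ of $\mU$ and split it into two blocks $\mU_1 = \langle u_1,\dots,u_{\lceil n/2\rceil}\rangle_{\Fq}$ and $\mU_2 = \langle u_{\lceil n/2\rceil+1},\dots,u_n\rangle_{\Fq}$, so that $\mU = \mU_1 + \mU_2$ and each block has $\Fq$-dimension at most $\lceil n/2 \rceil \leq k-1$ (using $n \leq 2k-2$).

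Next I would pass to $\Fqm$-spans: since $\dim_{\Fq}\mU_i \leq k-1$, the subspace $\langle \mU_i\rangle_{\Fqm}$ has $\Fqm$-dimension at most $k-1$, hence is contained in some $\Fqm$-linear hyperplane $\mH_i$ of $\Fqmk$ (extend a spanning set to a basis and drop a vector, or pick any hyperplane containing it). Then $\mU_i \subseteq \langle\mU_i\rangle_{\Fqm} \cap \Fqmk$, and more to the point $\mU_i \subseteq \mH_i \cap \mU$ since $\mU_i \subseteq \mU$ and $\mU_i \subseteq \mH_i$. Therefore
$$\mU = \mU_1 + \mU_2 \subseteq (\mH_1 \cap \mU) + (\mH_2 \cap \mU) \subseteq \mU,$$
which forces equality, so $\mU$ is $2$-spannable by definition. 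One should double-check the boundary: if $n$ is odd the two blocks have sizes $(n+1)/2$ and $(n-1)/2$, and $(n+1)/2 \leq k-1$ is exactly $n \leq 2k-3$; but if $n \leq 2k-2$ with $n$ odd then $n \leq 2k-3$ automatically, so the inequality is fine in all cases, and if $n$ is even both blocks have size $n/2 \leq k-1$.

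I do not anticipate a serious obstacle here; the only point requiring a sentence of care is the claim that a subspace of $\Fqm$-dimension at most $k-1$ sits inside a hyperplane, which is immediate, and the bookkeeping on the ceiling function for odd $n$. One could alternatively phrase the whole argument geometrically in terms of $L_\mU$ and weights — choosing hyperplanes $\mH_i$ through complementary pieces of the point set — but the linear-algebra splitting above is the cleanest route. A remark worth adding afterward: this proposition shows that the interesting (non-$2$-spannable, i.e. rank-intersecting) regime starts at $n \geq 2k-1$, matching the necessary condition of Corollary~\ref{Cor:Nec}.
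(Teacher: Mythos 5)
Your proof is correct and follows essentially the same route as the paper's: pick an $\Fq$-basis of $\mU$, split it into two blocks each of at most $k-1$ vectors (the paper splits at index $k-1$ rather than $\lceil n/2\rceil$, which is immaterial), and place each block inside an $\Fqm$-linear hyperplane so that $\mU$ is the sum of its intersections with the two hyperplanes. No gap; your extra care about the ceiling function and the existence of the containing hyperplanes is just a more explicit version of what the paper leaves implicit.
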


\begin{proof}
Let $(u_{1}, \dots, u_{n})$ be an $\Fq$-basis of $\mU$. We can then choose $\mH_{1}$ to be a hyperplane containing $u_{1}, \dots, u_{k-1}$ and $\mH_{2}$ a hyperplane containing $u_{k}, \dots, u_{n}$. With this choice of $\mH_{1}, \mH_{2}$, we have
$$\mU =  \mH_{1} \cap \mU  +  \mH_{2} \cap \mU ,$$
which implies that $\mU$ is $2$-spannable.
\end{proof}

\begin{theorem} \label{thm:rank_int_geo}
Let $\C$ be a nondegenerate $\nkdm$ code with generator matrix $G$. The following statements are equivalent:
\begin{enumerate}
\item The code $\C$ is rank-metric intersecting.
\item For all $A \in \GL(n, q)$, the code $\C \cdot A$ is Hamming-metric intersecting.
\item For all $A \in \GL(n, q)$, the multiset $\mS \subseteq \pgm$, obtained identifying the columns of $G \cdot A$ with points in the projective space, is not contained in the union of two hyperplanes.
\item The $[n,k]_{q^m/q}$ system $\mU$ corresponding to the code $\C$ is not $2$-spannable.
\end{enumerate}
\end{theorem}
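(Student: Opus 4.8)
The plan is to prove the chain of equivalences $(1)\Leftrightarrow(4)$, $(1)\Leftrightarrow(2)$, and $(2)\Leftrightarrow(3)$, with the last one being essentially the geometric restatement of the Hamming-metric intersecting property for a multiset of points. The heart of the argument is the dictionary in \eqref{eq:weight}: for a vector $u\in\F_{q^m}^k$, the codeword $uG$ has support $\sigma(uG)$ which sits inside $\F_q^n$, and its rank is $n-\wt_{\mathcal U}(\langle u\rangle^\perp)$. More precisely, I would first establish the finer fact that $\sigma(uG)=\{v\in\F_q^n : \text{coordinates of }v\text{ pair to zero against }\mathcal U\cap\langle u\rangle^\perp\}$; equivalently, under the perfect pairing between $\F_q^n$ and $\mathcal U\cong\F_q^n$, the support $\sigma(uG)$ is exactly the annihilator of $\mathcal U\cap\langle u\rangle^\perp$. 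This is the key lemma and I expect it to be the main technical obstacle, since one must carefully track how the row space of $\Mat_\Gamma(uG)$ corresponds to the hyperplane $\langle u\rangle^\perp$ of $\F_{q^m}^k$ and its intersection with $\mathcal U$; it is implicit in the references (\cite{rankminimal_ABNR}) but should be spelled out.

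Granting this lemma, $(1)\Leftrightarrow(4)$ follows by a dimension count. Given $c=uG$ and $c'=u'G$ nonzero, write $\mathcal H_1=\langle u\rangle^\perp$ and $\mathcal H_2=\langle u'\rangle^\perp$, two hyperplanes of $\F_{q^m}^k$. Under the annihilator correspondence, $\sigma(c)\cap\sigma(c')$ is the annihilator of $(\mathcal U\cap\mathcal H_1)+(\mathcal U\cap\mathcal H_2)$, so $\sigma(c)\cap\sigma(c')=\{0\}$ if and only if $(\mathcal U\cap\mathcal H_1)+(\mathcal U\cap\mathcal H_2)=\mathcal U$. Thus $\C$ fails to be rank-metric intersecting exactly when there exist two hyperplanes $\mathcal H_1,\mathcal H_2$ of $\F_{q^m}^k$ with $\mathcal U=(\mathcal U\cap\mathcal H_1)+(\mathcal U\cap\mathcal H_2)$, i.e. exactly when $\mathcal U$ is $2$-spannable. (One checks every hyperplane of $\F_{q^m}^k$ arises as some $\langle u\rangle^\perp$, and conversely; nondegeneracy of $\C$ is used to ensure $\mathcal U$ spans $\F_{q^m}^k$ so this is well-posed.) This gives $(1)\Leftrightarrow(4)$.

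For $(1)\Leftrightarrow(2)$: one direction is Proposition \ref{prop:rankinthammingint} applied to every $\C\cdot A$, since $\C\cdot A$ is again rank-metric intersecting by Proposition \ref{prop:equivalence}, hence Hamming-metric intersecting. For the converse, I would argue the contrapositive: if $\C$ is not rank-metric intersecting, pick nonzero $c,c'\in\C$ with $\sigma(c)\cap\sigma(c')=\{0\}$; then $\sigma(c)\oplus\sigma(c')$ has $\F_q$-dimension at most $n$, so one can choose $A\in\GL(n,q)$ carrying $\sigma(c)$ into $\langle e_1,\dots,e_r\rangle$ and $\sigma(c')$ into $\langle e_{r+1},\dots,e_{r+s}\rangle$ (disjoint coordinate blocks), using $\sigma(c\cdot A)=\sigma(c)\cdot A$. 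Then the Hamming supports of $c\cdot A$ and $c'\cdot A$ are disjoint, so $\C\cdot A$ is not Hamming-metric intersecting. Finally $(2)\Leftrightarrow(3)$ is the standard geometric translation: $\C\cdot A$ has generator matrix $G\cdot A$ whose columns form the multiset $\mathcal S$, and $\C\cdot A$ is Hamming-metric intersecting if and only if there do \emph{not} exist two codewords with disjoint supports; a codeword $uG A$ has Hamming support the set of columns of $GA$ \emph{not} lying in $\langle u\rangle^\perp$, so two codewords have disjoint Hamming supports precisely when the union of two hyperplanes $\langle u\rangle^\perp,\langle u'\rangle^\perp$ covers all columns, i.e. $\mathcal S$ is contained in the union of two hyperplanes. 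Collecting the four implications closes the loop.
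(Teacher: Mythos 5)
Your proposal is correct, and it reaches statement (4) by a genuinely different route than the paper. The paper's chain is $(1)\Leftrightarrow(2)$, $(2)\Leftrightarrow(3)$ (outsourced to a known geometric characterization of Hamming-intersecting codes), and then $(3)\Leftrightarrow(4)$ by an elementary argument: a $2$-spannable $\mU$ has an $\F_q$-basis contained in two hyperplanes, some $A\in\GL(n,q)$ realizes that basis as the columns of $G\cdot A$, and conversely the $\GL(n,q)$-action preserves $\mU$. You instead prove $(1)\Leftrightarrow(4)$ directly via the duality lemma identifying $\sigma(uG)$ with the annihilator of $\mU\cap\langle u\rangle^{\perp}$ under the perfect pairing induced by $v\mapsto Gv^{T}$ (nondegeneracy making this an isomorphism $\F_q^n\to\mU$); this lemma is correct — it follows from $\sigma(c)^{\perp}=\{v\in\F_q^n: cv^{T}=0\}$ and $uGv^{T}=0\Leftrightarrow Gv^{T}\in\langle u\rangle^{\perp}$ — and since annihilators turn sums into intersections, $\sigma(c)\cap\sigma(c')=\{0\}$ becomes exactly $(\mU\cap\mH_1)+(\mU\cap\mH_2)=\mU$. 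Your treatments of $(1)\Leftrightarrow(2)$ coincide with the paper's, and your $(2)\Leftrightarrow(3)$ spells out the translation the paper cites. What your approach buys is a self-contained linear-algebraic proof of the central equivalence $(1)\Leftrightarrow(4)$ that refines the weight formula \eqref{eq:weight} from a dimension count to an identification of subspaces, and it does not route the key equivalence through the Hamming-metric literature; the cost is that you must carefully set up and prove the annihilator lemma, which the paper avoids by keeping $(3)$ as the pivot. One small point worth making explicit in your write-up: in the definition of $2$-spannable the two hyperplanes need not be distinct a priori, but for a nondegenerate code $\mU\not\subseteq\mH$ for any hyperplane $\mH$, so they are automatically distinct and correspond to non-proportional $u,u'$, while proportional codewords trivially have intersecting supports; this closes the quantifier-matching between "all pairs of nonzero codewords" and "some pair of hyperplanes."
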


\begin{proof}
$(1)\Rightarrow(2):$ by Proposition \ref{prop:rankinthammingint}, if $\C$ is rank-metric intersecting, then it is also Hamming-metric intersecting. By Proposition \ref{prop:equivalence}, any code of the form $\C \cdot A$ with $A \in \GL(n,q)$ is rank-metric intersecting. Consequently, any code of the form $\C \cdot A$ is Hamming-metric intersecting.

\noindent $(2)\Rightarrow(1):$ if $\C$ is not rank-metric intersecting, then there exist $c, c' \in \C$ such that $\sigma(c) \cap \sigma(c') = \{0\}$. Let $B \subset \Fqn$ be an $\Fq$-basis of $\sigma(c)$ and $B' \subset \Fqn$ be an $\Fq$-basis of $\sigma(c')$. Since $\sigma(c) \cap \sigma(c') = \{0\}$, it is clear that the vectors in $B\cup B'$ are linearly independent, and their union can be completed to form a basis $\mB$ of $\Fqn$. Let $A \in \GL(n, q)$ be the change-of-basis matrix from $\mB$ to the canonical basis of $\Fqn$. It is then clear that  $$\{i:(c \cdot A)_i\neq 0 \}\cap \{i:(c' \cdot A)_i\neq 0 \}=\varnothing,$$ so that the code $\C \cdot A$ is not Hamming-metric intersecting.

\noindent $(2)\Leftrightarrow(3):$ it follows from \cite[Theorem 2.6]{borello2025geometry}.

\noindent $(3)\Rightarrow(4):$  if $\mU$ is $2$-spannable, then it has a basis contained in two hyperplanes of $\Fqmk$, and there exists an invertible matrix $A \in \GL(n,q)$ such that the columns of $G \cdot A$ are the vectors of this basis. It is then clear that the set of points $\mS \subset \pgm$ obtained from these columns will be contained in two projective hyperplanes (the images of the corresponding hyperplanes in $\Fqmk$).

\noindent $(4)\Rightarrow(3):$ recall that $\mU$ is the  $\Fq$-span of the columns of $G$, so that the action of $\GL(n,q)$ leaves it invariant. If the multiset $\mS$ is contained in two hyperplanes of $\pgm$, then $\mU$ has a basis contained in two hyperplanes of $\Fqmk$, i.e., $\mU$ is $2$-spannable.
\end{proof}

\begin{remark}
    From Theorem \ref{thm:rank_int_geo} and Proposition \ref{prop:boundspannable} we get a different proof of the fact that the length of a rank-metric intersecting code of dimension $k$ has to be at least $2k-1$.
\end{remark}

\begin{remark}
It is important to note that the property of being $2$-spannable is not stable under inclusion (in coding theoretical language, extending a rank-metric intersecting code by adding a column to its generator matrix may not result in a rank-metric intersecting code). Indeed, if we take a Gabidulin code ${\rm Gab}_{5, 3}(1,\alpha,\alpha^2,\alpha^3,\alpha^4)$, where $\alpha^5=\alpha^2+1$ is a primitive element of $\F_{32}$, this is rank-metric intersecting and one can check that there is no way to add a column to the generator matrix that still results in a rank-metric intersecting code. This behavior is completely different from that of minimal codes.
\end{remark}

\begin{remark}\label{rmk:clubs}
 We can read the example of Remark \ref{rmk:nonnecessary} in geometric terms: $L_\mU$ in this case is a linear set of rank $4$ in ${\rm PG}(1,16)$ which is a $2$-\emph{club}, that is a linear set with only one point of weight $2$ and all the others of weight $1$. The $2$-system $\mU$ is clearly not $2$-spannable. As shown in \cite[Lemma 2.12]{de2016linear}, for $h\geq 3$, the linear set associated to the $[h,2,2]_{q^h/q}$ code $\mC$ generated by
 $$G=\begin{pmatrix}
     \alpha&\alpha^2&\ldots & \alpha^{h-2}& \alpha^{h-1}&0\\
     0&0&\ldots&0&1&\alpha
 \end{pmatrix},$$
where $\alpha$ is a primitive element of $\F_{q^h}$, is an $(h-2)$-\emph{club} in ${\rm PG}(1,q^h)$ for any $q$. This means that there is exactly one point of weight $h-2$ and all the others have weight $1$. The $q$-system associated, of rank $h$, is clearly not $2$-spannable. Hence $\mC$ is rank-metric intersecting.
 
Also, if we have a linear set of rank $2k$ in ${\rm PG}(k-1,q^{2k})$ with only one hyperplane of weight $k$ and all the others of smaller weight, then the $q$-system $\mU$ is not $2$-spannable and this would give an infinite class of rank-metric intersecting $[2k,k,k]_{q^{2k}/q}$ codes not satisfying the sufficient condition of Theorem \ref{thm:sufficient}. However, it is not clear if such linear sets exist for all $k>2$. 
\end{remark}

This leads naturally to the following questions:

\begin{question}
Can we construct other examples of rank-metric intersecting codes with minimum distance not exceeding half the length? In particular, are there infinite families of such codes with dimension greater than 
$2$?
\end{question}

\bigskip

\section{Bounds on the parameters}\label{sec:bounds}

In this section, we explore bounds on the parameters of rank-metric intersecting codes that can be derived from our geometric characterization.

\begin{lemma}\label{prop:peso}
Let $\mU$ be an $[n,k]_{q^m/q}$ system in $\mV := \Fqmk$. Then for any $\Fm$-linear subspace $\mM$ of codimension $s$, we have
\begin{equation} \label{peso}
\wt_\mU (\mM)\geq n - sm.    
\end{equation}
Hence, if $\mU$ is not $2$-spannable and $\mH$ is a hyperplane of $\mV$, 
$$\wt_\mU(\mH) \leq n - k.$$
\end{lemma}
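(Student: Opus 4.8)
The plan is to prove the two assertions separately; both come down to dimension counting, and the only point needing genuine care is a single inequality about $\Fm$-spans, which I flag below.

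For \eqref{peso}: I view $\mV = \Fqmk$ as an $\Fq$-vector space, of dimension $km$. By definition $\wt_\mU(\mM) = \dim_{\Fq}(\mU \cap \mM)$, while an $\Fm$-subspace $\mM$ of $\Fm$-codimension $s$ has $\Fq$-dimension $(k-s)m$. Grassmann's formula inside the $\Fq$-space $\mV$ then yields
$$\dim_{\Fq}(\mU \cap \mM) \;\geq\; \dim_{\Fq}\mU + \dim_{\Fq}\mM - \dim_{\Fq}\mV \;=\; n + (k-s)m - km \;=\; n - sm,$$
which is exactly \eqref{peso}; nothing subtle here.

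For the ``Hence'' statement I argue by contraposition: assume some hyperplane $\mH$ satisfies $\wt_\mU(\mH) \geq n-k+1$, and show that $\mU$ is $2$-spannable. Set $\mH_1 := \mH$ and pick an $\Fq$-linear complement $W$ of $\mU \cap \mH_1$ inside $\mU$, so that $\mU = (\mU \cap \mH_1) \oplus W$ and $\dim_{\Fq} W = n - \wt_\mU(\mH_1) \leq k-1$. An $\Fq$-basis of $W$ is in particular an $\Fm$-generating set of $\langle W \rangle_{\Fm}$, hence $\dim_{\Fm}\langle W \rangle_{\Fm} \leq \dim_{\Fq} W \leq k-1 < k = \dim_{\Fm}\mV$; therefore $\langle W \rangle_{\Fm}$ is a proper $\Fm$-subspace of $\mV$ and lies in some hyperplane $\mH_2$. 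Then $W \subseteq \mU \cap \mH_2$, so
$$\mU = (\mU \cap \mH_1) + W \subseteq (\mU \cap \mH_1) + (\mU \cap \mH_2) \subseteq \mU,$$
forcing equality throughout; thus $\mU$ is $2$-spannable. Contrapositively, if $\mU$ is not $2$-spannable, then $\wt_\mU(\mH) \leq n-k$ for every hyperplane $\mH$ of $\mV$.

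The step I expect to be the main (though modest) obstacle is precisely the passage from ``$\dim_{\Fq} W$ is small'' to ``$\langle W \rangle_{\Fm}$ is a proper subspace'': one must invoke $\dim_{\Fm}\langle W \rangle_{\Fm} \leq \dim_{\Fq} W$ and resist the temptation to use the (false) reverse inequality, the point being that extending scalars can only shrink dimension. Everything else — choosing $W$, completing $\langle W \rangle_{\Fm}$ to a hyperplane (any hyperplane through it works, so no genericity is needed), and reading off the bound — is routine linear algebra together with the defining property of $2$-spannability, and I foresee no further difficulty.
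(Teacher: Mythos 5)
Your proof is correct and follows essentially the same route as the paper: the first inequality is the identical Grassmann computation, and for the second part the paper likewise takes a hyperplane of weight $\geq n-k+1$, observes that a complement of $\mU\cap\mH$ in $\mU$ is spanned by at most $k-1$ vectors lying in a second hyperplane, and concludes $2$-spannability. Your version merely makes explicit the step the paper leaves implicit, namely that $\dim_{\Fm}\langle W\rangle_{\Fm}\leq\dim_{\Fq}W\leq k-1$ forces $\langle W\rangle_{\Fm}$ into a hyperplane.
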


\begin{proof}
The first inequality is a direct consequence of the Grassmann's formula. Indeed, we have
\begin{align*}
 \wt_\mU(\mM) & = \dim_{\Fq}(\mM \cap \mU) = \dim_{\Fq}(\mM) + \dim_{\Fq}(\mU) - \dim_{\Fq}(\mM + \mU)\\   
 & \geq \dim_{\Fq}(\mM) + \dim_{\Fq}(\mU) - \dim_{\Fq}(\mV)\\
 &=m(k-s)+n-mk.
\end{align*}
For the second inequality, suppose there exists a hyperplane $\mH'$ such that $\wt_\mU(\mH') \geq n - k + 1$, with $\mU$ not contained in $\mH'$. Then there are at most $k - 1$ $\Fq$-linearly independent vectors in another hyperplane, say $\bar \mH$. Therefore, we have
$$\mU =  \mU \cap \mH'  +  \mU \cap \bar \mH,$$
and thus $\mU$ is $2$-spannable.
\end{proof}

\begin{theorem}\label{thm:bounddistance}
Let $\C$ be a nondegenerate $\nkdm$ rank-metric intersecting code. Then
$$k \leq d \leq m.$$
\end{theorem}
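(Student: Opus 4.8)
The plan is to establish the two inequalities separately. The upper bound $d \le m$ is elementary and uses only that $\C$ is nonzero: by Definition~\ref{Def:rank}, for any codeword $c \in \Fqmn$ the quantity $\rk(c)$ is the rank of the matrix $\Mat_\Gamma(c) \in \mat{m}{n}$, which has $m$ rows, so $\rk(c) \le m$; applying this to a nonzero codeword of minimal rank yields $d \le m$.

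The lower bound $d \ge k$ is where the hypotheses genuinely enter. Since $\C$ is nondegenerate, it has a well-defined associated $[n,k]_{q^m/q}$ system $\mU = \mU_G$, and by the equivalence $(1)\Leftrightarrow(4)$ of Theorem~\ref{thm:rank_int_geo}, the fact that $\C$ is rank-metric intersecting translates into $\mU$ not being $2$-spannable. Now fix any nonzero codeword $c \in \C$ and write $c = uG$ with $u \in \Fqmk \setminus \{0\}$. Since $u \ne 0$, the subspace $\langle u\rangle^\perp$ is a hyperplane of $\Fqmk$, so the second assertion of Lemma~\ref{prop:peso} applies and gives $\wt_\mU(\langle u\rangle^\perp) \le n - k$. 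Combining this with the weight formula \eqref{eq:weight},
$$\rk(c) = \rk(uG) = n - \wt_{\mU}(\langle u\rangle^\perp) \ge n - (n-k) = k.$$
Since $c$ was an arbitrary nonzero codeword, $d = \min_{c \in \C\setminus\{0\}} \rk(c) \ge k$.

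I do not expect a real obstacle here: the substance of the lower bound has already been isolated in Theorem~\ref{thm:rank_int_geo} (translating the combinatorial intersecting property into non-$2$-spannability of $\mU$) and in Lemma~\ref{prop:peso} (bounding hyperplane weights from above for non-$2$-spannable systems). The only point requiring care is the bookkeeping around \eqref{eq:weight}: one must check that $\langle u\rangle^\perp$ is a genuine hyperplane, which needs $u \ne 0$ (guaranteed since $c \ne 0$ and $G$ has full rank), and that the weight is taken with respect to precisely the system $\mU_G$ attached to the chosen generator matrix $G$. If one prefers to avoid invoking Lemma~\ref{prop:peso} as a black box, the same conclusion follows directly: were there a nonzero $c = uG$ with $\rk(c) \le k-1$, then by \eqref{eq:weight} the hyperplane $\langle u\rangle^\perp$ would satisfy $\wt_\mU(\langle u\rangle^\perp) \ge n-k+1$, leaving at most $k-1$ independent vectors of $\mU$ outside it; choosing a second hyperplane through those remaining vectors would exhibit $\mU$ as the sum of its intersections with two hyperplanes, i.e. $\mU$ would be $2$-spannable, contradicting Theorem~\ref{thm:rank_int_geo}.
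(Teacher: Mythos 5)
Your proof is correct and follows essentially the same route as the paper: the upper bound $d\le m$ from the trivial rank bound on an $m\times n$ matrix, and the lower bound $d\ge k$ by combining the hyperplane-weight bound of Lemma~\ref{prop:peso} (applicable because the associated $q$-system is not $2$-spannable, via Theorem~\ref{thm:rank_int_geo}) with formula \eqref{eq:weight}. Your extra care about $u\neq 0$ and the self-contained fallback argument at the end are sound but not needed beyond what the paper already does.
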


\begin{proof}
The statement $d \leq m$ is trivial (because the rank cannot exceed $m$).

Let $G$ be a generator matrix of $\mC$. The second part of Lemma \ref{prop:peso} and Formula \eqref{eq:weight} implies that, for all $u\in \Fqm^\ast$,
\[\rk(uG)=n-\wt_{\mU_G}(\langle u\rangle^\perp)\geq n-(n-k)=k.\]
Hence, $d=\min_{u\in \Fqm^\ast} \rk(uG)\geq k$.
\end{proof}

\begin{corollary}
If $n = 2k - 1$ and $n \leq m$, then a nondegenerate $\nkdm$ rank-metric intersecting code is an MRD code.
\end{corollary}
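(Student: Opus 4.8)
The plan is to combine the distance bound from Theorem~\ref{thm:bounddistance} with the Singleton bound in the rank metric. First I would note that by Theorem~\ref{thm:bounddistance}, a nondegenerate rank-metric intersecting code satisfies $d \geq k$. With $n = 2k-1$ this gives $d \geq k = \frac{n+1}{2}$, so $d \geq \frac{n+1}{2}$, equivalently $n - d + 1 \leq k$.

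Next, since we assume $n \leq m$, the Singleton bound reads $km \leq m(n - d + 1)$, hence $k \leq n - d + 1$. Combining this with the previous inequality $n - d + 1 \leq k$ forces $k = n - d + 1$, i.e. $d = n - k + 1$, which is precisely the condition for $\mathcal{C}$ to be MRD.

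The only subtlety to address is the degenerate/nondegenerate hypothesis: Theorem~\ref{thm:bounddistance} is stated for nondegenerate codes, so I would either restate the corollary with the nondegeneracy assumption explicit (as it already is), or remark that a rank-metric intersecting code of length $n = 2k-1$ attaining the minimal possible length is automatically nondegenerate — but the cleanest route is simply to invoke the nondegeneracy hypothesis that is already in the statement. There is no real obstacle here; the proof is a two-line squeeze between Theorem~\ref{thm:bounddistance} and the Singleton bound. The one point worth double-checking is that the Singleton bound in the form $km \leq \max(m,n)\cdot(\min(n,m) - d + 1)$ indeed specializes, under $n \leq m$, to $km \leq m(n-d+1)$, which is immediate.

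\begin{proof}
By Theorem~\ref{thm:bounddistance}, since $\mathcal{C}$ is nondegenerate and rank-metric intersecting, we have $d \geq k$. As $n = 2k-1$, this gives $n - d + 1 = 2k - d \leq k$. On the other hand, since $n \leq m$, the Singleton bound in the rank metric yields $km \leq \max(m,n)\cdot(\min(n,m) - d + 1) = m(n - d + 1)$, hence $k \leq n - d + 1$. Combining the two inequalities, $k = n - d + 1$, that is $d = n - k + 1$, so $\mathcal{C}$ is an MRD code.
\end{proof}
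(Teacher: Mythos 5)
Your proof is correct and follows essentially the same route as the paper: both arguments squeeze the parameters between the bound $d \geq k$ from Theorem~\ref{thm:bounddistance} and the Singleton bound $k + d \leq n+1$ (valid since $n \leq m$), concluding that equality must hold when $n = 2k-1$. The algebraic bookkeeping is arranged slightly differently but the substance is identical.
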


\begin{proof}
When $n \leq m$, the Singleton bound is $k + d \leq n + 1$. By Theorem \ref{thm:bounddistance}, we have $k \leq d$, which combined with the Singleton bound yields $$2k \leq k + d \leq n + 1.$$ 
If $n = 2k - 1$, then both inequalities must be equalities, and the code is MRD.
\end{proof}

The following theorem provides nonexistence results for rank-metric intersecting codes.

\begin{theorem}\label{thm:existence}
Let $2\leq k\leq \left\lfloor \frac{m+1}{2}\right\rfloor$. There exists a nondegenerate $[n,k,>\frac{n}{2}]_{q^m/q}$ rank-metric intersecting code for any
$$ n\in \{m,\ldots,2m - 2k+1\}.$$
\end{theorem}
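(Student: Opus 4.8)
The plan is to produce an explicit family by appending columns to a Gabidulin code: start from $\mathrm{Gab}_{m,k}$ and add $n-m$ further, arbitrarily chosen (subject only to independence) columns to its generator matrix. I claim that the resulting $[n,k]_{q^m/q}$ code has minimum distance at least $m-k+1$; since $m-k+1>n/2$ holds exactly when $n\leq 2m-2k+1$, Theorem~\ref{thm:sufficient} then gives that the code is rank-metric intersecting. So the ``rank-metric intersecting'' part of the conclusion will be automatic, and the content is really a lower bound on the minimum distance.

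Concretely, I would fix an $\Fq$-basis $a_1,\dots,a_m$ of $\Fqm$ and let $M\in\Fqm^{k\times m}$ be the Moore matrix whose $i$-th row is $(a_1^{q^{i-1}},\dots,a_m^{q^{i-1}})$. Since $2\leq k\leq\lfloor(m+1)/2\rfloor$ forces $m\geq 2k-1\geq k$, the matrix $M$ has full row rank $k$ and generates the nondegenerate $[m,k,m-k+1]_{q^m/q}$ Gabidulin code; in particular its $m$ columns are $\Fq$-linearly independent. As $n\leq 2m-2k+1<mk=\dim_{\Fq}\Fqm^k$ for $k\geq 2$, I can extend these $m$ columns to an $\Fq$-linearly independent family of $n$ vectors of $\Fqm^k$; let $N\in\Fqm^{k\times(n-m)}$ collect the $n-m$ new vectors, put $G:=[\,M\mid N\,]$ and $\C:=\rs(G)$. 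Then $G$ has rank $k$ over $\Fqm$ (it already does on the submatrix $M$), so $\dim_{\Fqm}\C=k$, and since the $n$ columns of $G$ are $\Fq$-independent, Proposition~\ref{prop:nondeg} gives that $\C$ is nondegenerate of length $n$.

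For the distance, the key observation is that puncturing cannot increase the rank of a codeword: if $\Gamma$ is any $\Fq$-basis of $\Fqm$ and $x\in\Fqm^n$, then $\Mat_\Gamma$ of the projection of $x$ onto a subset of coordinates is a column-submatrix of $\Mat_\Gamma(x)$, and the column space of a column-submatrix is contained in that of the full matrix, so the rank can only drop. Applying this to $x=uG$ with $u\in\Fqm^k\setminus\{0\}$ and to the first $m$ coordinates gives $\rk(uM)\leq\rk(uG)$; since $M$ has full row rank, $uM\neq 0$ is a codeword of $\mathrm{Gab}_{m,k}$, hence $\rk(uM)\geq m-k+1$. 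Thus $\rk(uG)\geq m-k+1$ for every nonzero $u$, i.e.\ $d(\C)\geq m-k+1$. Finally $m-k+1>n/2\iff n<2m-2k+2\iff n\leq 2m-2k+1$, which holds by hypothesis, so $d(\C)>n/2$ and $\C$ is rank-metric intersecting by Theorem~\ref{thm:sufficient}.

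I do not anticipate a genuine obstacle; the two points needing a little care are (i) isolating the ``puncturing does not increase rank'' fact, which is exactly what makes the choice of $N$ irrelevant beyond nondegeneracy, and (ii) the elementary bookkeeping $m-k+1>n/2\Leftrightarrow n\leq 2m-2k+1$ together with the side conditions $k\leq m$ and $n\leq mk$, all immediate from $2\leq k\leq\lfloor(m+1)/2\rfloor$ and $n\leq 2m-2k+1$. An equivalent geometric formulation would take $\mU=\mathcal{W}\oplus\mathcal{B}$, where $\mathcal{W}$ is the rank-$m$ $q$-system of $\mathrm{Gab}_{m,k}$ (scattered with respect to hyperplanes) and $\mathcal{B}$ is any $\Fq$-subspace of dimension $n-m$ meeting $\mathcal{W}$ trivially: projecting $\mU\cap\mH$ along $\mathcal{W}$ yields $\wt_\mU(\mH)\leq\wt_{\mathcal{W}}(\mH)+\dim_{\Fq}\mathcal{B}\leq(k-1)+(n-m)<n/2$ for every hyperplane $\mH$, so $\mU$ is not $2$-spannable by Remark~\ref{rmk:weight} (equivalently Lemma~\ref{prop:peso}); this merely repackages the same estimate.
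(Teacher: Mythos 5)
Your proposal is correct and is essentially the paper's own proof: the paper also builds $\mU=\mathcal{W}\oplus\overline{\mathcal{W}}$ from the $q$-system $\mathcal{W}$ of a Gabidulin code plus an arbitrary complementary $\Fq$-subspace, and its Grassmann estimate $\wt_{\mathcal{W}}(\mH)\geq \wt_\mU(\mH)+m-n$ against scatteredness with respect to hyperplanes is, via \eqref{eq:weight}, exactly your ``puncturing does not increase rank'' bound $\rk(uG)\geq \rk(uM)\geq m-k+1>n/2$. The only difference is that you phrase the argument in coding-theoretic language and the paper in the language of hyperplane weights, as you yourself note at the end.
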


\begin{proof}
We will prove the statement in terms of $q$-systems.

Let $\mathcal{W}$ be an $[m,k]_{q^m/q}$ system that is scattered with respect to hyperplanes (for example that corresponding to a Gabidulin code of parameters $[m,k,m-k+1]_{q^m/q}$). Let $\overline{\mathcal{W}}$ be an $\Fq$-linear subspace of $\Fqmk$ of dimension $r\leq m(k-1)$ and such that $\mathcal{W}\cap \overline{\mathcal{W}}=\{0\}$. 

Set $\mU = \mathcal{W} \oplus \overline{\mathcal{W}}$. Then $n := \dim_{\Fq} \mU = m + r$.

We want to prove that $\mU$ is not $2$-spannable whenever $r\leq m-2k+1$.

Suppose $r\leq m-2k+1$ and assume that there exists a hyperplane $\mH$ such that $\wt_\mU(\mH) \geq \frac{n}{2}$. Then
\begin{align*}
w_\mathcal{W}(\mH) & = \dim_{\Fq}(\mH \cap \mathcal{W})  = \dim_{\Fq}(\mH \cap \mU \cap \mathcal{W}) \\& = \dim_{\Fq}(\mH \cap \mU) + \dim_{\Fq}(\mathcal{W}) - \dim_{\Fq}((\mH \cap \mU) + \mathcal{W})\\ 
& \geq \dim_{\Fq}(\mH \cap \mU) + \dim_{\Fq}(\mathcal{W}) - \dim_{\Fq}(\mU)\\ & \geq \frac{m+r}{2} + m - (m + r) =\frac{m - r}{2}\geq \frac{m-(m-2k+1)}{2}\geq k,
\end{align*}
which contradicts the fact that $\mathcal{W}$ is scattered with respect to hyperplanes. It follows $\wt_{\mU}(\mH)<\frac{n}{2}$ for every hyperplane $\mH$, so that $\mU$ is not $2$-spannable (see Remark \ref{rmk:weight}). Moreover, the weight of every codeword of the associated code is greater than $n-\frac{n}{2}=\frac{n}{2}$. 
\end{proof}

The proof of Theorem \ref{thm:existence} gives an explicit way to construct a nondegenerate rank-metric intersecting code with the above parameters. We illustrate this with an example. 

\begin{example}\label{exa:quasiMRD}
Let $m=5$ and $k=2$. Let $\mathcal{W}$ be the $[5,2]_{q^5/q}$ system spanned by the columns of 
$$G = \begin{pmatrix}
1 & \alpha & \alpha^2 & \alpha^3 & \alpha^4 \\
1 & \alpha^q & \alpha^{2q} & \alpha^{3q} & \alpha^{4q}
\end{pmatrix},$$
where $\alpha$ is a primitive element of $\F_{q^5}$. It is easy to observe that 
$$\mathcal{W}=\{(a,a^q)^T:a\in \F_{q^5}\}.$$
Now take any vector $(x,y)$ such that $y\neq x^q$. Then the $[6,2]_{q^5/q}$ code generated by
$$G = \begin{pmatrix}
1 & \alpha & \alpha^2 & \alpha^3 & \alpha^4 & x\\
1 & \alpha^q & \alpha^{2q} & \alpha^{3q} & \alpha^{4q}& y
\end{pmatrix}$$
is rank-metric intersecting. Moreover, its minimum distance is greater than $3$. By the Singleton bound, this is a $[6,2,4]_{q^5/q}$ code, which is a quasi-MRD code (see \cite{de2018weight}).
\end{example}

\begin{remark}
One can verify that 
\begin{itemize}
    \item for $m\leq 11$ and $k\leq \lfloor \frac{m+1}{2}\rfloor$,
    \item for $m\geq 12$ and $k=2$ or $\left\lceil \frac{m+2+\sqrt{(m+2)^2-16}}{4}\right\rceil< k\leq \lfloor \frac{m+1}{2}\rfloor$,
\end{itemize}
all codes of length $2m-2k$ obtained extending Gabidulin codes with columns chosen as in the proof of Theorem \ref{thm:existence} and Example \ref{exa:quasiMRD} are quasi-MRD codes of parameters $[2m-2k,k,m-k+1]_{q^m/q}$ and they are rank-metric intersecting.\\ In the same way, one can verify that 
\begin{itemize}
    \item for $m\leq 8$ and $k\leq \lfloor \frac{m+1}{2}\rfloor$,
    \item for $m\geq 9$ and $k=2$ or $\left\lceil \frac{m+3+\sqrt{(m+3)^2-16}}{4}\right\rceil< k\leq \lfloor \frac{m+1}{2}\rfloor$,
\end{itemize}
all codes of length $2m-2k+1$ obtained extending Gabidulin codes with columns chosen as in the proof of Theorem \ref{thm:existence} and Example \ref{exa:quasiMRD} are quasi-MRD codes of parameters $[2m-2k+1,k,m-k+1]_{q^m/q}$ and they are rank-metric intersecting. Quasi-MRD codes are objects introduced in \cite{de2018weight} and studied geometrically in \cite{marino2023evasive}. As pointed out in the last reference, the existence of nontrivial quasi-MRD codes is a widely open problem for general parameters and only few families are known.
\end{remark}

\begin{theorem}\label{Th:nonexistence}
If $\mU$ is an $[n,k]_{q^m/q}$ system with $n > 2m-3$, then $\mU$ is $2$-spannable.
\end{theorem}
\begin{proof}
We divide the proof in a number of cases.
\begin{itemize}
    \item  $\mathbf{n=2m.}$ First, observe that \( |L_\mU| \leq \frac{q^{2m} - 1}{q - 1} \). This implies that \( L_\mU \) cannot intersect all $\Fqm$-subspaces of codimension 2. Indeed, if \( L_\mU \) intersected every such subspace, it would be a so-called 2-blocking set, and it is known from~\cite{bose1966characterization} that 2-blocking sets have size at least \( q^{2m} + q^m + 1 \). Therefore, there exists an $\Fqm$-subspace $\mM$ of codimension $2$ in $\Fqmk$ whose intersection with $\mU$ is trivial. Let $\mH_1$ and $\mH_2$ be two distinct hyperplanes such that $\mM=\mH_1\cap \mH_2$; then 
    \begin{eqnarray*}
        \wt_\mU(\mH_i)&=&\dim_{\Fq}(\mH_i\cap \mU)=\dim_{\Fq}\mH_i+\dim_{\Fq}\mU-\dim_{\Fq}(\mH_i+\mU)\\
        &\geq& m(k-1)+2m-mk=m,
    \end{eqnarray*}
    for $i \in \{1,2\}$. Also,   $$(\mU\cap\mH_1)\cap(\mU\cap\mH_2)=\mU\cap(\mH_1\cap \mH_2)=\mU\cap\mM=\{0\},$$ 
    so that $\mU = \mU \cap \mH_1  \oplus \mU \cap \mH_2 $, and hence $\mU$ is $2$-spannable.
\vspace*{0.3 cm}
\item $\mathbf{n>2m.}$ The $\Fq$-space $\mU$ contains an $\Fq$-subspace $\mathcal{W}$ of dimension $2m$. Let $\mM$ be an $\Fqm$-subspace of codimension $2$ in $\Fqmk$ whose intersection with $\mathcal{W}$ is trivial (which exists for the same argument as before). Note that $\mathcal{W}\oplus \mM=\F_{q^m}^k$, so that $\mU+\mM=\F_{q^m}^k$.
Hence, by the Grassmann's formula, $$\wt_\mathcal{U}(\mM) = n - 2m.$$
Let $\mH_1$ and $\mH_2$ be two hyperplanes such that $\mM=\mH_1\cap\mH_2$. Then 
$$\wt_\mU(\mH_i) \geq n - m,$$ 
for $i \in \{1,2\}$.
Now, 
\begin{align*}
 \dim_{\Fq}( \mU \cap \mH_1  +  \mU \cap \mH_2 ) & =   \dim_{\Fq} (\mU \cap \mH_1) + \dim_{\Fq} (\mU \cap \mH_2) -\dim_{\Fq}  (\mU \cap \mH_1 \cap \mH_2)\\
 & = \wt_\mU(\mH_1)+ \wt_\mU(\mH_2)- \wt_\mU(\mM)\\
 & \geq n-m+n-m-n+2m=n.
\end{align*}
Consequently, $\mU = \mU \cap \mH_1  +  \mU \cap \mH_2$ and $\mathcal{U}$ is $2$-spannable.
\vspace*{0.3 cm}
\item $\mathbf{n=2m-1.}$ Let $\mH$ be a hyperplane of $\Fqmk$. From Lemma \ref{prop:peso}, we have $\wt_\mU(\mH) \geq m - 1$. Let $\mM$ be an $\Fqm$-subspace of codimension $2$ in $\Fqmk$ whose intersection with $\mU$ is trivial (which exists for the same argument as before). This implies that $(\mU\cap \mH_1)\cap(\mU\cap \mH_1)=\{0\}$ for all pairs of different hyperplanes $\mH_1,\mH_2$ containing $\mM$, so that the intersections of $\mU\setminus\{0\}$ with all hyperplanes containing $\mM$ form a partition of $\mU\setminus\{0\}$. If all $q^m + 1$ hyperplanes containing $\mM$ have weight $m - 1$ in $\mU$, then
\[
(q^{m-1} - 1)(q^m + 1) <  q^{2m - 1} - 1=|\mU| - 1,
\]
a contradiction. By easy cardinality arguments, there exists a unique  hyperplane $\mH$ containing $\mM$ of weight $m$ in $\mU$, and all the others have weight $m - 1$. Choose another hyperplane $\mH'$ through $\mathcal{M}$ different from $\mH$. We have that  $\mU=\mU\cap \mH+\mU\cap\mH'$ and $\mathcal{U}$ is $2$-spannable.
\vspace*{0.3 cm}
\item $\mathbf{n=2m-2.}$ Let $\mH$ be a hyperplane of $\Fqmk$. From Lemma \ref{prop:peso}, we have $\wt_\mU(\mH) \geq m - 2$. Let $\mM$ be an $\Fqm$-subspace of codimension $2$ in $\Fqmk$ whose intersection with $\mU$ is trivial (which exists for the same argument as before). Again, the intersections of $\mU\setminus\{0\}$ with all hyperplanes containing $\mM$ form a partition of $\mU\setminus\{0\}$. If there exists a hyperplane containing $\mM$ of weight $m$, then $\mU$ is $2$-spannable. Suppose instead that all $q^m + 1$ hyperplanes containing $\mM$ have weight $m - 1$ or $m - 2$ in $U$. Let $a$ and $b$ be the number of hyperplanes containing $\mM$ with weight $m - 1$ and $m - 2$ in $\mU$, respectively. Then the only solution of the system
\begin{equation}\label{System}
\left\{
\begin{array}{l}
a(q^{m-1} - 1) + b(q^{m-2} - 1) = q^{2m - 2} - 1 \\
a + b = q^m + 1,
\end{array}
\right.
\end{equation}
is $a = q + 1$ and $b = q^m - q$. Therefore, there exist at least $2$ hyperplanes containing $\mM$ of weight $m - 1$ in $\mU$, which proves that $\mU$ is $2$-spannable.
\end{itemize}
\end{proof}

\begin{remark}
Note that the same argument does not hold for $n=2m-3$, in general, since there are solutions to System \eqref{System} allowing hyperplanes of weight which is too small.     
\end{remark}

Combining Theorems \ref{thm:existence} and \ref{Th:nonexistence}, we can characterize the existence of nondegenerate rank-metric intersecting codes. First, recall that such a code is intersecting if and only if its associated $q$-system is not $2$-spannable. This condition implies that any nondegenerate rank-metric intersecting code of parameters $\nkdqm$ must necessarily satisfy $n < 2m - 2$. We have also established the existence of these codes for parameters $\nkdqm$ when $n \in \{2k-1,\ldots,2m - 2k+1\}$, provided $2 \leq k \leq \frac{m+1}{2}$. Consequently, a \emph{gray area} remains concerning the length $n$: specifically,
$$2m - 2k + 2 \leq n \leq 2m - 3.$$
Within this range, our current results do not definitively determine whether a nondegenerate rank-metric intersecting code exists. Interestingly, for $k=2$, this gray area is empty. The first instance where an open problem arises is for $k=3$ and $m=5$.

In this specific case, the gray area collapses to the set $\{6,7\}$. Let us assume the $\mathbb{F}_q$-dimension of the associated $q$-system $\mU$ is $7$. According to Lemma \ref{prop:peso}, every line within $\mU$ must have a weight of at least $2$. If, however, a line of weight $5$ exists in $\mU$, then $\mU$ is necessarily $2$-spannable.

Therefore, we proceed under the assumption that every line $\ell$ in $\mU$ has a weight of at most $4$. By \cite[Theorem 5.2]{lia2024short}, we know that there are exactly $q^2+1$ lines of weight $4$ in $\mU$. Consider one such line, $\ell$, and let $P$ be a point on $\ell$ that is not contained in $\mU$. If all other lines passing through $P$ had a weight of $2$ in $\mU$, this would lead to the contradiction:
\[1\cdot (q^4-1)+q^5\cdot q^7-1=(q^2-1)<|\mU|-1.\]
Consequently, there must exist at least one other line through $P$ with a weight of $3$ in $\mU$, which in turn implies that $\mU$ is $2$-spannable.

Concerning the case of length $6$, if there were a line of weight $4$ or $5$ in the associated $q$-system $\mU$, then simple counting arguments show that $\mU$ would be $2$-spannable. So, suppose that $\mU$ is not $2$-spannable and that all lines in $\mU$ have weight at most $3$. Then $\mU$ must be scattered. Indeed, if $\mU$ contains a point of weight $2$, say $P$, then there are $q^3 + q^2 + q + 1$ lines of weight $3$ in $\mU$ through $P$. Let $\ell$ be one such line, and let $Q$ be a point on $\ell$ of weight $1$ in $\mU$. By projecting $\mU$ from $Q$ onto a line $m$ not passing through $Q$, we obtain an $\F_q$-linear set of $\PG(1, q^5)$ of rank $5$ that has at least one point of weight $2$. Then, by \cite[Main Theorem]{de2022weight}, this projected linear set must contain at least one more point of weight $2$, which arises from another line, say $s$, through $Q$ of weight $3$ in $\mU$. However, this line $s$ would then intersect all of the remaining $q^3 + q^2 + q$ lines of weight $3$ through $P$, a contradiction.

In this remaining case, we conducted an exhaustive computational search for $q=2$, which yielded no positive results. Calculations were performed using the computational algebra system {\sc Magma} \cite{BCP97}, accumulating approximately 85 days of total CPU time on an Intel Core i7-5960X CPU (3.00GHz) computer. Our search methodology proceeded as follows:
\begin{itemize}
\item We began by considering an $q$-system $\mU'$ of $\mathbb{F}_q$-dimension $5$. Up to $P\Gamma L$-equivalence, and without loss of generality, such a system can be represented in one of three forms:
$$\{(x,x^q+Ax^{q^3}+Bx^{q^4},x^{q^2}+Cx^{q^3}+Dx^{q^4}) : x \in \mathbb{F}_{q^5}\}, \quad \text{or}$$
$$\{(x,x^q+Ax^{q^2}+Bx^{q^4},x^{q^3}+Cx^{q^4}) : x \in \mathbb{F}_{q^5}\}, \quad \text{or}$$
$$\{(x,x^{q^2}+Ax^{q^4},x^{q^3}+Bx^{q^4}) : x \in \mathbb{F}_{q^5}\},$$
for some $A,B,C,D\in \mathbb{F}_{q^5}$.
\item Next, we constructed an $q$-system $\mU$ of $\mathbb{F}_q$-dimension $6$ extending $\mU$. By Lemma \ref{prop:peso}, such a system must intersect each line non-trivially. Since the line $X_0=0$ has weight $0$ in $\mU'$, we can, without loss of generality, assume that 
$$\mU=\mU'+\langle(0,\alpha,\beta) \rangle_{\mathbb{F}_q},$$
where $(\alpha,\beta)\in \mathbb{F}_{q^5}^2\setminus \{(0,0)\}.$
\item For each combination of the possible forms of $\mU'$ and the choice of $(\alpha,\beta)$, we systematically checked whether there exist lines of weight $3$ in $\mU$ that intersect at a point external to $\mU$. The existence of such lines would directly establish the $2$-spannability of $\mU$.
\end{itemize}

The preceding discussion can thus be summarized in the following theorem.

\begin{theorem}
The length of a nondegenerate $[n,3,d]_{q^5/q}$ rank-metric intersecting code is either $5$, in which case the code is MRD and explicit constructions are known, or $6$. In the binary case, however, a nondegenerate $[6,3,3]_{32/2}$ rank-metric intersecting code does not exist.
\end{theorem}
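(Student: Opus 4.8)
The plan is to split the argument according to the length $n$ and to work throughout on the geometric side, via Theorem~\ref{thm:rank_int_geo}: a nondegenerate $[n,3,d]_{q^5/q}$ code is rank-metric intersecting exactly when the associated $q$-system $\mU\subseteq\F_{q^5}^3$, an $\F_q$-subspace of dimension $n$, is not $2$-spannable. Since hyperplanes of $\PG(2,q^5)$ are lines, two elementary observations will be used repeatedly: if $P$ is a point with $\wt_\mU(P)=0$ then the $q^5+1$ lines through $P$ partition $\mU\setminus\{0\}$, so $\sum_{\ell\ni P}(q^{\wt_\mU(\ell)}-1)=q^n-1$; and if two distinct lines $\ell_1,\ell_2$ meet in such a point then $\dim_{\F_q}((\mU\cap\ell_1)+(\mU\cap\ell_2))=\wt_\mU(\ell_1)+\wt_\mU(\ell_2)$ by Grassmann's formula, so $\ell_1,\ell_2$ witness $2$-spannability once $\wt_\mU(\ell_1)+\wt_\mU(\ell_2)\ge n$.

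First I would pin down the possible lengths. Corollary~\ref{Cor:Nec} gives $n\ge 2k-1=5$ and Theorem~\ref{Th:nonexistence} (with $m=5$) gives $n\le 2m-3=7$; when $n=5$ we have $d\ge k$ by Theorem~\ref{thm:bounddistance} and $k+d\le n+1$ by the rank Singleton bound, so $d=k=3=n-k+1$, the code is MRD, and Gabidulin codes $\mathrm{Gab}_{5,3}(a_1,\dots,a_5)$ are the announced examples. To exclude $n=7$, assume $\mU$ is not $2$-spannable with $\dim_{\F_q}\mU=7$; by Lemma~\ref{prop:peso} every line has weight in $\{n-m,\dots,n-k\}=\{2,3,4\}$, and by \cite[Theorem 5.2]{lia2024short} there exists a line $\ell$ of weight $4$. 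On $\ell$ the points of positive $\mU$-weight number at most $(q^4-1)/(q-1)<q^5+1$, so there is a point $P\in\ell$ with $\wt_\mU(P)=0$; applying the first observation at $P$ shows that the other $q^5$ lines through $P$ cannot all have the minimal weight $2$ (their contribution would be too small), hence some $\ell'\ni P$ has weight $\ge 3$, and then $\wt_\mU(\ell)+\wt_\mU(\ell')\ge 7=\dim_{\F_q}\mU$ makes $\mU$ $2$-spannable. Thus $n\in\{5,6\}$.

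The substantial part is the binary case $n=6$. Here $d=3$ is forced ($d\ge k=3$ by Theorem~\ref{thm:bounddistance}, while the rank Singleton bound with $n=6>m=5$ gives $d\le 3$), so it suffices to show no non-$2$-spannable $\F_2$-subspace $\mU\subseteq\F_{32}^3$ of dimension $6$ exists; suppose one does. Lemma~\ref{prop:peso} already forbids lines of weight $4$ or $5$. Next, one shows $\mU$ must be scattered: if a point $P$ had weight $2$, counting the pencil at $P$ (all lines through $P$ having weight $2$ or $3$) produces $q^3+q^2+q+1$ lines of weight $3$ through $P$; choosing one such line $\ell$ and a point $Q\in\ell$ of weight $1$, and projecting $\mU$ from $Q$ onto a line not through $Q$, one gets an $\F_q$-linear set of rank $5$ in $\PG(1,q^5)$ with a point of weight $2$, hence by \cite[Main Theorem]{de2022weight} a second point of weight $\ge 2$; this pulls back to a weight-$3$ line $s\ni Q$ with $s\ne\ell$ and $P\notin s$, which then has to meet all the remaining $q^3+q^2+q$ weight-$3$ lines through $P$ — a counting contradiction. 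So $\mU$ is scattered.

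Finally, the scattered case for $q=2$ is reduced to a finite check. A counting argument over the lines of $\PG(2,q^5)$ shows that a scattered rank-$6$ linear set has a line of weight exactly $1$; splitting it off yields a scattered $[5,3]_{q^5/q}$ sub-system $\mU'$ which, up to $P\Gamma L$-equivalence, is one of the normal forms displayed just before the statement, and then $\mU=\mU'+\langle(0,\alpha,\beta)\rangle_{\F_2}$ for some $(\alpha,\beta)\in\F_{32}^2\setminus\{(0,0)\}$ (nonzero by Lemma~\ref{prop:peso}, since $\mU'$ has weight $0$ on the hyperplane $X_0=0$ which $\mU$ must meet). A {\sc Magma}~\cite{BCP97} search over all normal forms and all $(\alpha,\beta)$ then exhibits, in every case, two weight-$3$ lines of $\mU$ meeting at a weight-$0$ point — a certificate of $2$-spannability — which contradicts the assumption. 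The main obstacle is precisely this binary $n=6$ analysis: the reduction to the scattered case rests on the weight-distribution results of \cite{lia2024short,de2022weight} for linear sets in $\PG(2,q^5)$ and $\PG(1,q^5)$, and the residual scattered case appears to be settled only by a (substantial) exhaustive computation; everything else is bookkeeping with Grassmann's formula and pencils of hyperplanes.
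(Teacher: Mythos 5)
Your plan follows the paper's own argument essentially step for step: the same length range $\{5,6,7\}$ from the general bounds, the same exclusion of $n=7$ via the weight-$4$ lines of \cite[Theorem 5.2]{lia2024short} and a pencil count at a weight-$0$ point, the same reduction of the binary $n=6$ case to scattered systems via \cite[Main Theorem]{de2022weight}, and the same final exhaustive {\sc Magma} search over normal forms $\mU'+\langle(0,\alpha,\beta)\rangle_{\F_2}$. The proposal is correct (modulo the computation, which the paper also only reports), so there is nothing substantive to add.
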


We conclude this section by posing the following questions.

\begin{question}
Does a $[6,3,3]_{q^5/q}$ rank-metric intersecting code exist, for $q>2$?
\end{question}
\begin{question}
Are there rank-metric intersecting $\nkdqm$ codes for $2m-2k+2\leq n\leq 2m-3$?
\end{question}

\bigskip

\section{$(2,1)$-separating in the rank metric}\label{sec:separ}

This section focuses on the concept of $(2,1)$-separation in the rank metric, which is, to the best of our knowledge, a natural yet unexplored topic. In the Hamming metric, as highlighted in \cite{sagalovich2009separating,randriambololona20132}, this topic—and more broadly, the study of separating systems—has a well-established history and its roots can arguably be traced back to \cite{renyi1961random}. Notably, \cite{korner1995extremal} presents a particularly elegant interpretation framed within information theory using binary sequences: how many different points can one find in the $n$-dimensional binary Hamming space so that no three of them are on a line (three points in a metric space are on a line if they satisfy
the triangle inequality with equality)?

\begin{definition}
A $(2,1)$-\emph{separating code} over $\F_q$ of length $n$ is a subset $\C$ of $\F_q^n$
such that any pairwise distinct $x,y,z\in \C$ satisfy
\[{\rm d}(x,z)<{\rm d}(x,y)+{\rm d}(y,z),\]
where ${\rm d}$ is the Hamming distance in $\F_q^n$.  
\end{definition}

The above question may be formulated in following terms: what is the largest cardinality of a $(2,1)$-separating code over $\F_q$ of length $n$? If the code is linear, then the condition of being $(2,1)$-separating is equivalent to that of being Hamming-intersecting and the question is the classical question of finding Hamming-intersecting codes of large rate. 

We can reformulate the above definition in the rank-metric framework.

\begin{definition}
A $(2,1)$-\emph{separating rank-metric code} of length $n$ is a subset $\C$ of $\F_{q^m}^n$
such that any pairwise distinct $x,y,z\in \C$ satisfy
\[\rk(x-z)<\rk(x-y)+\rk(y-z),\]
where the rankn is defined as in Definition \ref{Def:rank}.
\end{definition}

In the linear case, the following holds: an $\nkdqm$ code is $(2,1)$-separating if and only if for any nonzero $c,c'\in \C$
\[\rk(c+c')<\rk(c)+\rk(c').\]

\begin{theorem}
Let $\C$ be an $\nkdqm$ code. If $\C$ is rank-metric intersecting, then $\C$ is $(2,1)$-separating.
\end{theorem}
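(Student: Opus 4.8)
The plan is to prove the contrapositive: if $\C$ is not $(2,1)$-separating, then $\C$ is not rank-metric intersecting. So suppose there exist nonzero $c, c' \in \C$ with $\rk(c+c') = \rk(c) + \rk(c')$. The key observation is that, by the definition of support via row spans, we always have $\sigma(c+c') \subseteq \sigma(c) + \sigma(c')$, and hence $\rk(c+c') = \dim_{\Fq}\sigma(c+c') \leq \dim_{\Fq}(\sigma(c)+\sigma(c'))$. Combining this with Grassmann's formula gives
\[
\rk(c) + \rk(c') = \rk(c+c') \leq \dim_{\Fq}(\sigma(c)+\sigma(c')) = \rk(c) + \rk(c') - \dim_{\Fq}(\sigma(c)\cap\sigma(c')),
\]
which forces $\dim_{\Fq}(\sigma(c)\cap\sigma(c')) \leq 0$, i.e. $\sigma(c)\cap\sigma(c') = \{0\}$. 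This exhibits two nonzero codewords with trivially intersecting supports, so $\C$ is not rank-metric intersecting, completing the contrapositive.

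The only point requiring a small verification is the inclusion $\sigma(c+c') \subseteq \sigma(c) + \sigma(c')$. This is immediate from the fact that $\Mat_\Gamma$ is $\Fq$-linear in its argument, so $\Mat_\Gamma(c+c') = \Mat_\Gamma(c) + \Mat_\Gamma(c')$, and the row span of a sum of matrices is contained in the sum of their row spans; taking row spans then gives $\sigma(c+c') \subseteq \rs(\Mat_\Gamma(c)) + \rs(\Mat_\Gamma(c')) = \sigma(c) + \sigma(c')$.

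I do not expect any serious obstacle here: the statement is essentially the rank-metric analogue of the elementary fact that in the Hamming metric a linear code is $(2,1)$-separating if and only if it is Hamming-intersecting, and the proof is a one-line application of Grassmann's formula once the support-of-a-sum inclusion is noted. One could alternatively phrase the argument directly (without contraposition): if $\C$ is rank-metric intersecting then for any nonzero $c,c'$ we have $\dim_{\Fq}(\sigma(c)\cap\sigma(c')) \geq 1$, whence $\rk(c+c') \leq \dim_{\Fq}(\sigma(c)+\sigma(c')) = \rk(c)+\rk(c') - \dim_{\Fq}(\sigma(c)\cap\sigma(c')) \leq \rk(c)+\rk(c') - 1 < \rk(c)+\rk(c')$, which is exactly the $(2,1)$-separating condition in the linear case as reformulated just above the theorem. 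I would likely present this direct version, as it is marginally cleaner and avoids invoking the contrapositive.
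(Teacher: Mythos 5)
Your proposal is correct and follows essentially the same route as the paper's proof: the inclusion $\sigma(c+c')\subseteq\sigma(c)+\sigma(c')$ combined with Grassmann's formula, with your ``direct version'' at the end being word-for-word the paper's argument. The only (harmless) difference is that you first phrase it contrapositively and explicitly justify the support-of-a-sum inclusion, which the paper takes as immediate.
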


\begin{proof}
If $\C$ is rank-metric intersecting, then, for any nonzero $c,c'\in \C$,
$$\dim(\sigma(c)+\sigma(c'))=\dim\sigma(c)+\dim\sigma(c')-\dim(\sigma(c)\cap\sigma(c'))<\rk(c)+\rk(c').$$ 
Now, $\sigma(c+c')\subseteq \sigma(c)+\sigma(c')$, so that
$$\rk(c+c')\leq \dim(\sigma(c)+\sigma(c'))<\rk(c)+\rk(c').$$
\end{proof}

\begin{remark}
The converse is not true. Actually, as we have already observed in Remark \ref{rmk:min-int}, the $[km,k,m]_{q^m/q}$ simplex codes are not intersecting for $k\geq 2$. However, they are $(2,1)$-separating: this is due to the fact that 
$$\rk(c+c')\leq m <2m=\rk(c)+\rk(c'),$$
for any nonzero $c,c'\in 
\C$.
\end{remark}

Note that the simplex codes are examples of rank-metric codes which are minimal. Actually, all minimal codes are $(2,1)$-separating.

\begin{theorem}
Let $\C$ be an $\nkdqm$ code. If $\C$ is minimal, then $\C$ is $(2,1)$-separating.
\end{theorem}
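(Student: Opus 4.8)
The plan is to show that minimality forces, for any two nonzero codewords $c,c'$, that neither support contains the other, and then to leverage this to get the strict inequality $\rk(c+c')<\rk(c)+\rk(c')$. First I would recall the key relation $\sigma(c+c')\subseteq \sigma(c)+\sigma(c')$, which always holds, together with Grassmann's formula
$$\dim_{\Fq}(\sigma(c)+\sigma(c'))=\rk(c)+\rk(c')-\dim_{\Fq}(\sigma(c)\cap\sigma(c')).$$
So to conclude I need to rule out the case $\sigma(c)\cap\sigma(c')=\{0\}$ \emph{and} $\sigma(c+c')=\sigma(c)+\sigma(c')$ happening simultaneously, since only then would the inequality fail to be strict.

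The main step is the following observation: if $\sigma(c)\cap\sigma(c')=\{0\}$, then minimality is violated. Indeed, consider $c+c'$. Since the supports meet trivially, $\sigma(c)\cap\sigma(c'+c)$... more carefully: I would argue that $\sigma(c)\subseteq \sigma(c+c')$ in this situation. To see this, note that for $x\in\Fqmn$ and $\lambda\in\Fqm$ one has $\sigma(\lambda x)=\sigma(x)$, and $\sigma$ behaves additively in the sense that $\sigma(c)\subseteq \sigma(c+c')+\sigma(c')$ (from $c=(c+c')-c'$). Now if $\sigma(c)\cap\sigma(c')=\{0\}$, I claim $\sigma(c+c')=\sigma(c)\oplus\sigma(c')$: the inclusion $\subseteq$ is automatic, and for $\supseteq$ one checks column-wise — writing everything over $\Fq$ via $\Mat_\Gamma$, the rowspan of $\Mat_\Gamma(c)+\Mat_\Gamma(c')$ contains the rowspan of each summand when the two rowspans are complementary, because then a row of $\Mat_\Gamma(c+c')$ projects onto the corresponding rows of each piece under the direct-sum decomposition. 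Hence $\sigma(c)\subsetneq \sigma(c+c')$, and since $c$ is nonzero and $c+c'$ is not a scalar multiple of $c$ (as $c'\notin\langle c\rangle$ — otherwise $\sigma(c')=\sigma(c)\neq\{0\}$, contradicting trivial intersection), this contradicts minimality.

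Therefore, under minimality, either $c'=\lambda c$ for some $\lambda$ (in which case there is nothing to prove, as we only consider $(2,1)$-separation among genuinely distinct codewords and the linear reformulation concerns $c,c'$ with $c'\notin\langle c\rangle$), or $\sigma(c)\cap\sigma(c')\neq\{0\}$, which by Grassmann gives $\dim_{\Fq}(\sigma(c)+\sigma(c'))<\rk(c)+\rk(c')$, and combined with $\rk(c+c')\leq \dim_{\Fq}(\sigma(c)+\sigma(c'))$ yields the desired strict inequality. The part I expect to be the main obstacle is the careful verification that $\sigma(c)\cap\sigma(c')=\{0\}$ actually forces $\sigma(c)\subseteq\sigma(c+c')$ (and not merely $\sigma(c+c')\subseteq\sigma(c)\oplus\sigma(c')$ with possibly strict inclusion) — this requires working with the matrix representations and the direct-sum structure of the row spaces, and is the only place where something beyond formal manipulation of the $\sigma$ operator is needed.
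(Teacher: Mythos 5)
There is a genuine gap, and it sits exactly where you predicted: the claim that $\sigma(c)\cap\sigma(c')=\{0\}$ forces $\sigma(c+c')=\sigma(c)\oplus\sigma(c')$ (hence $\sigma(c)\subsetneq\sigma(c+c')$) is false. The projection argument only shows that $\rowspan(\Mat_\Gamma(c)+\Mat_\Gamma(c'))$ \emph{projects onto} each of $\rowspan(\Mat_\Gamma(c))$ and $\rowspan(\Mat_\Gamma(c'))$ along the direct-sum decomposition; surjectivity of a projection does not give containment, since the rowspan of the sum can be a ``diagonal'' subspace of $\sigma(c)\oplus\sigma(c')$ meeting both factors trivially. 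The paper's own simplex code is an explicit counterexample: for $c=e_1G$ and $c'=e_2G$ one has $\sigma(c)\cap\sigma(c')=\{0\}$, yet $\rk(c+c')=m<2m=\rk(c)+\rk(c')$, so $\sigma(c+c')$ has dimension $m$ and cannot equal $\sigma(c)\oplus\sigma(c')$ (indeed $\sigma(c)\cap\sigma(c+c')=\{0\}$ there). Note also that your intermediate claim, if true, would prove that every minimal code is rank-metric intersecting, which Remark \ref{rmk:min-int} and Figure \ref{fig:venn-rank} explicitly refute.

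The fix is to run the argument in the other direction, as the paper does: assume separation fails, i.e.\ $\rk(c+c')=\rk(c)+\rk(c')$ for some nonzero $c,c'$. Combining the always-valid inclusion $\sigma(c+c')\subseteq\sigma(c)+\sigma(c')$ with
\[
\dim_{\Fq}\bigl(\sigma(c)+\sigma(c')\bigr)\le\rk(c)+\rk(c')=\rk(c+c')=\dim_{\Fq}\sigma(c+c')
\]
forces $\sigma(c+c')=\sigma(c)+\sigma(c')\supsetneqq\sigma(c)$ (strict because $\rk(c')\ge 1$), while $c+c'$ is not a scalar multiple of $c$ (the equality of ranks rules out $c'\in\langle c\rangle_{\Fqm}$). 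This contradicts minimality directly, with no need to decide whether $\sigma(c)\cap\sigma(c')$ is trivial. Your Grassmann computation is fine and handles the case $\sigma(c)\cap\sigma(c')\neq\{0\}$ correctly; it is only the attempted elimination of the trivial-intersection case that cannot be salvaged.
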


\begin{proof}
Let $c,c'$ two nonzero codewords. We have $\sigma(c+c')\subseteq \sigma(c)+\sigma(c')$. If $\rk(c+c')=\rk(c)+\rk(c')$, then 
$$\sigma(c+c')=\sigma(c)+\sigma(c'),$$
so that $\sigma(c)\subsetneqq \sigma(c+c')$. Since $c'\neq 0$,  $c+c'$ is not a multiple of $c$. Hence the code $\C$ is not minimal.
\end{proof}

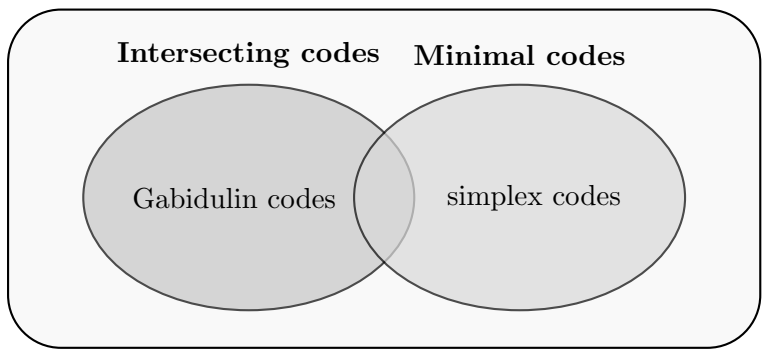
\begin{figure}[h]
\centering
\begin{tikzpicture}
    \draw[thick,rounded corners=20pt,fill=lightgray!10] (-5,-2) rectangle (5,2.5);
    \node at (0,2.9) {};
    \begin{scope}
        \clip (-5,-3) rectangle (5,3); 
        \draw[thick,fill=darkgray!30,opacity=0.7] (-1.8,0) ellipse (2.2 and 1.5);
        \node at (-1.8,0) {Gabidulin codes \, \,};
        \node at (-1.8,1.9) {\textbf{Intersecting codes}};
    \end{scope}
    \begin{scope}
        \clip (-5,-3) rectangle (5,3);  
        \draw[thick,fill=gray!30,opacity=0.7] (1.8,0) ellipse (2.2 and 1.5);
        \node at (1.8,0) {\, \, simplex codes};
        \node at (1.8,1.9) {\textbf{Minimal codes}};
    \end{scope}
\end{tikzpicture}
\vspace{-1.3cm}
\caption{\textbf{(2,1)-separating rank-metric codes}}
\label{fig:venn-rank}
\end{figure}

These findings are summarized in Figure \ref{fig:venn-rank}. When referring to Gabidulin codes in this context, we specifically mean those with dimension $k$ and length at least $2k-1$. If one considers the problem of determining the largest cardinality of a $(2,1)$-separating rank-metric code over $\mathbb{F}_{q^m}$ of length $n$, then rank-metric intersecting codes provide a partial answer. This is because, among all known $(2,1)$-separating rank-metric codes, the intersecting codes exhibit the highest rate.

\bigskip

\section{Frameproof codes}\label{sec:frameproof}

Frameproof codes were first introduced in 
\cite{boneh1998collusion} in the context of digital fingerprinting: in their paper, Boneh and Shaw present methods for assigning codewords to fingerprint digital content such as software, documents, music, and video. Fingerprinting involves uniquely marking each copy to enable traceability in case of unauthorized distribution, thus deterring misuse. A challenge arises when users collude: by comparing their copies, they may identify and manipulate the fingerprint to conceal their identities. Frameproof codes are proposed to build a general fingerprinting scheme resilient to collusion.

\begin{definition}
Let $\C\subseteq \F_{q}^n$ be a code (not necessarily linear) and $\mathcal{S}\subseteq \C$ a set of codewords. The \emph{set of descendants} of $\mS$ is 
$${\rm desc}(\mathcal{S})=\{w\in \F_{q}^n : \forall i\in \{1,\ldots,n\}, \exists c\in \mathcal{S}:w_i=c_i\}.$$
For $s\geq 2$, $\C$ is a $s$-\emph{frameproof code} if for any $\mathcal{S}\subseteq \C$ of size $|\mathcal{S}|\leq s$, it holds $${\rm desc}(\mathcal{S})\cap (\C\setminus\mathcal{S})=\varnothing.$$
\end{definition}

As explained in \cite{blackburn2003frameproof}, Boneh and Shaw employ a distinct definition of a descendant. The version of frameproof codes adopted here follows the formulation explicitly stated by Fiat and Tassa \cite{fiat1999dynamic}, who attribute its original introduction to Chor, Fiat, and Naor \cite{chor1994tracing}. For binary frameproof code constructions and discussions on their connections to related notions such as traceability codes and codes with the identifiable parent property, see the works by Stinson and Wei \cite{stinson1998combinatorial}, as well as Staddon, Stinson, and Wei \cite{staddon2001combinatorial}.

The following is a well-known remark.

\begin{remark}
If $s=2$, let $\mathcal{S}=\{c,c'\}$. Then
\begin{align*}
    {\rm desc}(\mathcal{S})&=\{w\in \F_{q}^n : \forall i\in \{1,\ldots,n\}, w_i=c_i \text{ or } w_i=c'_i\}\\
&=
\{w\in \F_{q}^n : \{i:w_i-c_i\neq 0\}\cap \{i:w_i-c'_i\neq 0\}=\varnothing\}.
\end{align*}
Consequently, a code $\mathcal{C}$ is defined as $2$-frameproof if, for any distinct codewords $c, c' \in \mathcal{C}$ and any codeword $x \in \mathcal{C} \setminus \{c,c'\}$, there exists an index $i \in \{1,\ldots,n\}$ such that $(x_i-c_i)(x_i-c'_i) \neq 0$. Notably, if $\mathcal{C}$ is a linear code, then the property of being $2$-frameproof is equivalent to being Hamming-metric intersecting.
\end{remark}

As discussed in \cite{blackburn2003frameproof}, even within the domain of $2$-frameproof codes, determining the largest achievable code rate for a given length is a problem of significant interest. This interest stems directly from applications such as digital fingerprinting, where a higher rate corresponds to a greater number of distinct fingerprints for a fixed length.

It is thus a natural extension to introduce analogous definitions and pose similar questions within the rank-metric setting. While the primary focus of this paper remains theoretical, these results could also find relevance in digital fingerprinting applications within the context of network coding, as suggested by works such as \cite{koetter2008coding} and \cite{silva2008rank}.

\begin{definition}
Let $\C\subseteq \F_{q^m}^n$ be a code (not necessarily linear) and $\mathcal{S}\subseteq \C$ a set of codewords. We define the \emph{set of rank-metric descendants} of $\mS$ as
$${\rm desc}_{\rk}(\mathcal{S})=\left\{w\in \F_{q^m}^n : \bigcap_{c\in\mathcal{S}}\sigma(w-c)=\{0\} \right\}.$$
For $s\geq 2$, $\C$ is $s$-rank-frameproof if for any $\mathcal{S}\subseteq \C$ of size $|\mathcal{S}|\leq s$, it holds that  ${\rm desc}_{\rk}(\mathcal{S})\cap (\C\setminus\mathcal{S})=\varnothing$.    
\end{definition}

\begin{theorem}
An $\nkdqm$ code is $2$-rank-frameproof if and only if it is rank-metric intersecting.
\end{theorem}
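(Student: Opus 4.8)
The plan is to prove both implications directly from the definitions, exploiting the $2$-element case of the descendant set, which by the computation just preceding the theorem (in the special case $s=2$) takes the explicit form
\[
{\rm desc}_{\rk}(\{c,c'\}) = \{ w \in \F_{q^m}^n : \sigma(w-c) \cap \sigma(w-c') = \{0\} \}.
\]
So a linear code $\mathcal C$ is $2$-rank-frameproof precisely when: for all distinct $c, c' \in \mathcal C$ and every $x \in \mathcal C \setminus \{c, c'\}$, we have $\sigma(x-c) \cap \sigma(x-c') \neq \{0\}$.

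For the ``if'' direction, suppose $\mathcal C$ is rank-metric intersecting. Take distinct $c, c' \in \mathcal C$ and $x \in \mathcal C \setminus \{c,c'\}$. Then $x - c$ and $x - c'$ are both \emph{nonzero} codewords of $\mathcal C$ (here linearity is used: $x-c, x-c' \in \mathcal C$, and they are nonzero since $x \neq c$ and $x \neq c'$). By Definition~\ref{def:rank_intersecting}, $\sigma(x-c) \cap \sigma(x-c') \neq \{0\}$, so $x \notin {\rm desc}_{\rk}(\{c,c'\})$. Since this holds for every such $x$, we get ${\rm desc}_{\rk}(\{c,c'\}) \cap (\mathcal C \setminus \{c,c'\}) = \varnothing$, and the case $|\mathcal S| \leq 1$ is trivial (a singleton $\{c\}$ has ${\rm desc}_{\rk}(\{c\}) = \{c\}$ by the convention on the empty-looking intersection, or one checks directly that $\sigma(w-c) = \{0\}$ forces $w = c$). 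Hence $\mathcal C$ is $2$-rank-frameproof.

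For the ``only if'' direction, we argue by contraposition: assume $\mathcal C$ is not rank-metric intersecting, so there exist nonzero $u, v \in \mathcal C$ with $\sigma(u) \cap \sigma(v) = \{0\}$. The key move is to find three codewords realizing a forbidden descendant configuration. Set $c = 0$, $c' = u - v$, and $x = u$. These lie in $\mathcal C$ by linearity. They are pairwise distinct: $x = u \neq 0 = c$ since $u \neq 0$; $c = 0 \neq u - v = c'$ since otherwise $u = v$ would give $\sigma(u) \cap \sigma(v) = \sigma(u) \neq \{0\}$ as $u \neq 0$, a contradiction; and $x = u \neq u - v = c'$ since $v \neq 0$. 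Now $\sigma(x - c) = \sigma(u)$ and $\sigma(x - c') = \sigma(u - (u-v)) = \sigma(v)$, so $\sigma(x-c) \cap \sigma(x-c') = \sigma(u) \cap \sigma(v) = \{0\}$. Therefore $x \in {\rm desc}_{\rk}(\{c, c'\}) \cap (\mathcal C \setminus \{c, c'\})$, so $\mathcal C$ is not $2$-rank-frameproof.

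The only genuine subtlety — and the step I would be most careful about — is the bookkeeping of distinctness in the contrapositive direction: one must verify all three of $c \neq c'$, $c \neq x$, $c' \neq x$, and each of these uses the hypothesis $u, v \neq 0$ together with $\sigma(u) \cap \sigma(v) = \{0\}$ in a slightly different way. A clean alternative, if one prefers symmetry, is to note directly that ${\rm desc}_{\rk}(\{c,c'\})$ depends only on the differences $w-c$, $w-c'$, so after translating by $c$ the problem reduces to: $\mathcal C$ linear is $2$-rank-frameproof iff for all nonzero $a \in \mathcal C$ and all $b \in \mathcal C$ with $b \neq 0, a$, one has $\sigma(b) \cap \sigma(b - a) \neq \{0\}$; setting $a = u - v$ (possibly after swapping the roles so that $a \neq 0$) and $b = u$ reproduces the obstruction. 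Either way, no calculation beyond Grassmann-free set manipulations is needed.
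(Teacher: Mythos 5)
Your proof is correct and follows essentially the same route as the paper: reduce to the explicit description of ${\rm desc}_{\rk}(\{c,c'\})$ and then use linearity to translate the condition on triples $(c,c',x)$ into the condition on pairs of nonzero codewords $(x-c,x-c')$. The paper states this last translation in one line, whereas you carefully supply the substitution $c=0$, $c'=u-v$, $x=u$ and the distinctness checks in the contrapositive direction — a welcome filling-in of details, but not a different argument.
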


\begin{proof}
Let $\mathcal{S}=\{c,c'\}$. Then
$${\rm desc}_\rk(\mathcal{S})=\{w\in \F_{q^m}^n : \sigma(w-c)\cap \sigma(w-c')=\{0\} \}.$$
Therefore, $\C$ is $2$-rank-frameproof if  for any $\{c,c'\}\subseteq \C$ and for any $ x\in \C\setminus \{c,c'\}$, $$\sigma(x-c)\cap \sigma(x-c')\neq \{0\}.$$ If the code is linear, then this is equivalent to being rank-metric intersecting.    
\end{proof}
 
Finally, it is important to note that an $\nkdqm$ $2$-rank-frameproof code must satisfy the condition $n \geq 2k-1$. Furthermore, when $n \leq m$, Gabidulin codes serve as examples of codes achieving the largest possible rate.

We conclude this section by posing the following question.

\begin{question}
Is this definition of descendants relevant in the context of network coding for constructing a digital fingerprint?
\end{question}

\bigskip

\bibliographystyle{abbrv}
\bibliography{articles.bib}

\begin{thebibliography}{10}

\bibitem{alagic2020status}
G.~Alagic, J.~Alperin-Sheriff, D.~Apon, D.~Cooper, Q.~Dang, J.~Kelsey, Y.-K.
  Liu, C.~Miller, D.~Moody, R.~Peralta, et~al.
\newblock Status report on the second round of the nist post-quantum
  cryptography standardization process.
\newblock {\em US Department of Commerce, NIST}, 2:69, 2020.

\bibitem{alfarano2022geometric}
G.~N. Alfarano, M.~Borello, and A.~Neri.
\newblock A geometric characterization of minimal codes and their asymptotic
  performance.
\newblock {\em Advances in Mathematics of Communications}, 16(1):115--133,
  2022.

\bibitem{rankminimal_ABNR}
G.~N. Alfarano, M.~Borello, A.~Neri, and A.~Ravagnani.
\newblock Linear cutting blocking sets and minimal codes in the rank metric.
\newblock {\em Journal of Combinatorial Theory, Series A}, 192:105658, 2022.

\bibitem{ashikhmin1998minimal}
A.~Ashikhmin and A.~Barg.
\newblock Minimal vectors in linear codes.
\newblock {\em IEEE {T}ransactions on {I}nformation {T}heory},
  44(5):2010--2017, 1998.

\bibitem{bartoli2024saturating}
D.~Bartoli, M.~Borello, and G.~Marino.
\newblock Saturating linear sets of minimal rank.
\newblock {\em Finite Fields and Their Applications}, 95:102390, 2024.

\bibitem{blackburn2003frameproof}
S.~R. Blackburn.
\newblock Frameproof codes.
\newblock {\em SIAM Journal on Discrete Mathematics}, 16(3):499--510, 2003.

\bibitem{boneh1998collusion}
D.~Boneh and J.~Shaw.
\newblock Collusion-secure fingerprinting for digital data.
\newblock {\em IEEE {T}ransactions on {I}nformation {T}heory},
  44(5):1897--1905, 1998.

\bibitem{bonini2023saturating}
M.~Bonini, M.~Borello, and E.~Byrne.
\newblock Saturating systems and the rank-metric covering radius.
\newblock {\em Journal of Algebraic Combinatorics}, 58(4):1173--1202, 2023.

\bibitem{borello2025geometry}
M.~Borello, W.~Schmid, and M.~Scotti.
\newblock The geometry of intersecting codes and applications to additive
  combinatorics and factorization theory.
\newblock {\em Journal of Combinatorial Theory, Series A}, 214:106023, 2025.

\bibitem{bose1966characterization}
R.~C. Bose and R.~Burton.
\newblock A characterization of flat spaces in a finite geometry and the
  uniqueness of the {H}amming and the {M}ac{D}onald codes.
\newblock {\em Journal of Combinatorial Theory}, 1(1):96--104, 1966.

\bibitem{BCP97}
W.~Bosma, J.~Cannon, and C.~Playoust.
\newblock The {M}agma algebra system. {I}. the user language.
\newblock {\em Journal of Symbolic Computation}, 24(3--4):235--265, 1997.

\bibitem{brassard2002oblivious}
G.~Brassard, C.~Cr{\'e}peau, and M.~Santha.
\newblock Oblivious transfers and intersecting codes.
\newblock {\em IEEE {T}ransactions on {I}nformation {T}heory},
  42(6):1769--1780, 2002.

\bibitem{chor1994tracing}
B.~Chor, A.~Fiat, and M.~Naor.
\newblock Tracing traitors.
\newblock In {\em Advances in cryptology—CRYPTO’94: 14th annual
  international cryptology conference Santa Barbara, California, USA August
  21--25, 1994 proceedings 14}, pages 257--270. Springer, 1994.

\bibitem{cohnen2003intersecting}
G.~D. Cohen, S.~Encheva, S.~Litsyn, and H.~G. Schaathun.
\newblock Intersecting codes and separating codes.
\newblock {\em Discrete Applied Mathematics}, 128(1):75--83, 2003.

\bibitem{CZ}
G.~D. Cohen and G.~Zémor.
\newblock Intersecting codes and independent families.
\newblock {\em IEEE {T}ransactions on {I}nformation {T}heory},
  40(6):1872--1881, 1994.

\bibitem{davydov2011linear}
A.~A. Davydov, M.~Giulietti, S.~Marcugini, and F.~Pambianco.
\newblock Linear nonbinary covering codes and saturating sets in projective
  spaces.
\newblock {\em Advances in Mathematics of Communications}, 5(1):119, 2011.

\bibitem{de2016linear}
M.~De~Boeck and G.~Van~de Voorde.
\newblock A linear set view on {KM}-arcs.
\newblock {\em Journal of Algebraic Combinatorics}, 44(1):131--164, 2016.

\bibitem{de2022weight}
M.~De~Boeck and G.~Van~de Voorde.
\newblock The weight distributions of linear sets in {PG}$(1, q^5)$.
\newblock {\em Finite Fields and Their Applications}, 82:102034, 2022.

\bibitem{de2018weight}
J.~de~la Cruz, E.~Gorla, H.~H. L{\'o}pez, and A.~Ravagnani.
\newblock Weight distribution of rank-metric codes.
\newblock {\em Designs, Codes and Cryptography}, 86:1--16, 2018.

\bibitem{fiat1999dynamic}
A.~Fiat and T.~Tassa.
\newblock Dynamic traitor tracing.
\newblock In {\em Annual International Cryptology Conference}, pages 354--371.
  Springer, 1999.

\bibitem{gorla2021rank}
E.~Gorla.
\newblock Rank-metric codes.
\newblock In {\em Concise Encyclopedia of Coding Theory}, pages 227--250.
  Chapman and Hall/CRC, 2021.

\bibitem{katona1983minimal}
G.~Katona and J.~Srivastava.
\newblock Minimal 2-coverings of a finite affine space based on {GF}(2).
\newblock {\em Journal of statistical planning and inference}, 8(3):375--388,
  1983.

\bibitem{koetter2008coding}
R.~Koetter and F.~R. Kschischang.
\newblock Coding for errors and erasures in random network coding.
\newblock {\em IEEE {T}ransactions on {I}nformation {T}heory},
  54(8):3579--3591, 2008.

\bibitem{korner1995extremal}
J.~K{\"o}rner.
\newblock On the extremal combinatorics of the {H}amming space.
\newblock {\em Journal of Combinatorial Theory, Series A}, 71(1):112--126,
  1995.

\bibitem{lia2024short}
S.~Lia, G.~Longobardi, G.~Marino, and R.~Trombetti.
\newblock Short rank-metric codes and scattered subspaces.
\newblock {\em SIAM Journal on Discrete Mathematics}, 38(4):2578--2598, 2024.

\bibitem{lunardon1999normal}
G.~Lunardon.
\newblock Normal spreads.
\newblock {\em Geometriae Dedicata}, 75:245--261, 1999.

\bibitem{marino2023evasive}
G.~Marino, A.~Neri, and R.~Trombetti.
\newblock Evasive subspaces, generalized rank weights and near {MRD} codes.
\newblock {\em Discrete Mathematics}, 346(12):113605, 2023.

\bibitem{massey1993minimal}
J.~L. Massey.
\newblock Minimal codewords and secret sharing.
\newblock In {\em Proceedings of the 6th joint Swedish-Russian international
  workshop on information theory}, pages 276--279, 1993.

\bibitem{miklos1984linear}
D.~Mikl{\'o}s.
\newblock Linear binary codes with intersection properties.
\newblock {\em Discrete Applied Mathematics}, 9(2):187--196, 1984.

\bibitem{plagne2011application}
A.~Plagne and W.~A. Schmid.
\newblock An application of coding theory to estimating {D}avenport constants.
\newblock {\em Designs, Codes and Cryptography}, 61:105--118, 2011.

\bibitem{polverino2010linear}
O.~Polverino.
\newblock Linear sets in finite projective spaces.
\newblock {\em Discrete Mathematics}, 310(22):3096--3107, 2010.

\bibitem{randriambololona20132}
H.~Randriambololona.
\newblock (2, 1)-separating systems beyond the probabilistic bound.
\newblock {\em Israel Journal of Mathematics}, 195:171--186, 2013.

\bibitem{randrianarisoa2020geometric}
T.~H. Randrianarisoa.
\newblock A geometric approach to rank metric codes and a classification of
  constant weight codes.
\newblock {\em Designs, Codes and Cryptography}, 88:1331--1348, 2020.

\bibitem{renyi1961random}
A.~R{\'e}nyi.
\newblock On random generating elements of a finite boolean algebra.
\newblock {\em Acta Scientiarum Mathematicarum Szeged}, 22(75-81):4, 1961.

\bibitem{retter1989intersecting}
C.~T. Retter.
\newblock Intersecting {G}oppa codes.
\newblock {\em IEEE {T}ransactions on {I}nformation {T}heory}, 35(4):822--828,
  1989.

\bibitem{sagalovich2009separating}
Y.~L. Sagalovich and A.~Chilingarjan.
\newblock Separating systems and new scopes of its application.
\newblock {\em Information Processes}, 9(4):225--248, 2009.

\bibitem{segre1955curve}
B.~Segre.
\newblock Curve razionali normali ek-archi negli spazi finiti.
\newblock {\em Annali di Matematica Pura ed Applicata}, 39:357--379, 1955.

\bibitem{sheekey2019linear}
J.~Sheekey.
\newblock Linear sets are to rank-metric codes as arcs are to {H}amming-metric
  codes.
\newblock {\em Oberwolfach Report}, (13):2019, 2019.

\bibitem{sheekey2020rank}
J.~Sheekey and G.~Van~de Voorde.
\newblock Rank-metric codes, linear sets, and their duality.
\newblock {\em Designs, Codes and Cryptography}, 88(4):655--675, 2020.

\bibitem{silva2008rank}
D.~Silva, F.~R. Kschischang, and R.~Koetter.
\newblock A rank-metric approach to error control in random network coding.
\newblock {\em IEEE {T}ransactions on {I}nformation {T}heory},
  54(9):3951--3967, 2008.

\bibitem{staddon2001combinatorial}
J.~N. Staddon, D.~R. Stinson, and R.~Wei.
\newblock Combinatorial properties of frameproof and traceability codes.
\newblock {\em IEEE {T}ransactions on {I}nformation {T}heory},
  47(3):1042--1049, 2001.

\bibitem{stinson1998combinatorial}
D.~R. Stinson and R.~Wei.
\newblock Combinatorial properties and constructions of traceability schemes
  and frameproof codes.
\newblock {\em SIAM Journal on Discrete Mathematics}, 11(1):41--53, 1998.

\end{thebibliography}

\end{document}